\colorlet{darkishRed}{red!80!black}
\colorlet{darkishBlue}{blue!60!black}
\colorlet{darkishGreen}{green!60!black}
\crefname{mainresult}{Theorem}{Theorems}
\def\calCommandfactory#1{%
  \expandafter\def\csname c#1\endcsname{\mathcal{#1}}}
\def\frakCommandfactory#1{%
  \expandafter\def\csname frak#1\endcsname{\mathfrak{#1}}}
\newcounter{ctr}
  \edef\X{\@Alph\c@ctr}
\newcommand{\Sbb}{\mathbb{S}}
\newcommand{\Nbb}{\mathbb{N}}
\newcommand{\Zbb}{\mathbb{Z}}
\numberwithin{equation}{section}
\newcommand{\theoremize}[2]{\newaliascnt{#1}{thm} \newtheorem{#1}[#1]{#2} \aliascntresetthe{#1}}
\theoremstyle{plain}
\newtheorem{mainresult}{Theorem}
\newtheorem{thm}{Theorem}[section]
\theoremstyle{definition}
\theoremstyle{plain}
\newenvironment{claimproof}{
  \pushQED{\qed}%
  \trivlist
  \item[\hskip\labelsep
        \itshape
    Proof of Claim.]\ignorespaces
}{%
  
  \popQED\endtrivlist
}
\newenvironment{customque}[1]
  {\innercustomque}
  {\endinnercustomque}
\title{Colouring the 1-skeleton of $d$-dimensional triangulations}
\author{Tim Planken}
\address{TU Bergakademie Freiberg}
\keywords{colouring, triangulation, simplicial complex, Heawood's theorem, four colour theorem}
\subjclass[2020]{05C15 (Primary); 05E45, 05C10 (Secondary)}
\newcommand{\unit}{\mathbbm{1}}
\DeclareRobustCommand{\cev}[1]{%
  {\mathpalette\do@cev{#1}}%
}
\newcommand{\do@cev}[2]{%
  \vbox{\offinterlineskip
    \sbox\z@{$\m@th#1 x$}%
    \ialign{##\cr
      \hidewidth\reflectbox{$\m@th#1\vec{}\mkern4mu$}\hidewidth\cr
      \noalign{\kern-\ht\z@}
      $\m@th#1#2$\cr
    }%
  }%
}
\begin{document}

\begin{abstract}
While every plane triangulation is colourable with three or four colours, Heawood showed that a plane triangulation is 3-colourable if and only if every vertex has even degree.
In $d \geq 3$ dimensions, however, every $k \geq d+1$ may occur as the chromatic number of some triangulation of $\Sbb^d$.
As a first step, Joswig~\cite{Joswig_2002} structurally characterised which triangulations of $\Sbb^d$ have a $(d+1)$-colourable 1-skeleton. In the 20 years since Joswig's result, no characterisations have been found for any $k>d+1$.

In this paper, we structurally characterise which triangulations of $\Sbb^d$ have a $(d+2)$-colourable 1-skeleton: they are precisely the triangulations that have a subdivision such that for every $(d-2)$-cell, the number of incident $(d-1)$-cells is divisible by three.
\end{abstract}

\maketitle

\section{Introduction}
\label{sec:introduction}

We consider the problem of colouring the vertices of the 1-skeleton\footnote{We define 1-skeleton, chamber and triangulation in \autoref{sec:definitions}.} of triangulations of the $d$-dimensional sphere $\Sbb^d$ for $d \geq 2$.
For each such triangulation, we need at least $d+1$ colours since every chamber induces a complete graph on $d+1$ vertices in the 1-skeleton.

For $d=2$, Heawood~\cite{heawood1898four} showed that a plane triangulation is colourable with three colours if and only if every vertex has even degree.
See also~\cite{golovina2019induction, krol1972sufficient, lovasz2007combinatorial, Tsai_2011} for alternative proofs and variations of Heawood's theorem.
On the other hand, by the four-colour theorem~\cite{Appel_1989}, \emph{every} plane triangulation is colourable with four colours.

For $d \geq 3$, however, there exists for every $k \geq 1$ a triangulation of $\Sbb^d$ whose 1-skeleton contains the complete graph $K_{d+k}$ (\cite{ueckerdtpersonalcommunication} and \autoref{thm:counterexample}).
Naturally, this raises the following question:

\begin{que}
\label{que:1}
    Let $k \geq 1$ be an integer.
    Can you find a structural characterisation for all $d \geq 3$ of the triangulations of $\Sbb^d$ whose $1$-skeleton is $(d+k)$-colourable?
\end{que}

Joswig~\cite{Joswig_2002} answered \autoref{que:1} for $k=1$ as follows:

\begin{thm}[Joswig~\cite{Joswig_2002}, `moreover'-part by Carmesin, Nevinson and Saunders~\cite{carmesin2022characterisation}]
    \label{thm:heawood}
    Let $d \geq 2$ be an integer and let $C$ be a triangulation of $\Sbb^d$.
    Then the following assertions are equivalent.
    \begin{enumerate}
        \item\label{itm:main-heawood-1} The $1$-skeleton of $C$ has a proper $(d+1)$-colouring.
        \item\label{itm:main-heawood-2} All $(d-2)$-cells of $C$ are incident with an even number of $(d-1)$-cells.
    \end{enumerate}
    Moreover, if $d=3$, then we may add:
    \begin{enumerate}
        \setcounter{enumi}{2}
        \item\label{itm:main-heawood-3} There exists a 3-edge-colouring of $C$ such that every 2-cell contains edges of all colours.
    \end{enumerate}
\end{thm}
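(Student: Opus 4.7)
My strategy is to prove $(1) \Leftrightarrow (2)$ by viewing the colouring as monodromy data on the dual graph of $C$, and then to derive $(1) \Leftrightarrow (3)$ for $d = 3$ via an $\mathbb{F}_2^2$-labelling trick.

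For $(1) \Rightarrow (2)$ I fix a $(d-2)$-cell $\sigma$ and consider its link $L$, which is a triangulation of $\Sbb^1$, i.e.\ a cycle. The vertices of $L$ are exactly those $v$ for which $\sigma \cup \{v\}$ is a $(d-1)$-cell, so the length of $L$ equals the number of $(d-1)$-cells incident to $\sigma$. In a proper $(d+1)$-colouring, each vertex of $L$ must take one of the two colours missing from $\sigma$, and any two adjacent vertices of $L$ sit together with $\sigma$ inside a common chamber and so must receive different colours. Thus $L$ is a properly $2$-coloured cycle, forcing its length to be even.

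For $(2) \Rightarrow (1)$ I will reconstruct a colouring chamber by chamber. Let $G$ be the dual graph whose vertices are the chambers of $C$ and whose edges connect chambers sharing a $(d-1)$-cell. Fix a chamber $\tau_0$ and a bijection $c_{\tau_0} \colon \tau_0 \to \{1,\dots,d+1\}$. Along any walk in $G$, transport this colouring by the rule: when crossing from $\tau$ to $\tau'$ through the shared face $\phi$, preserve the colours on $\phi$ and give the unique new vertex of $\tau'$ the colour of the unique omitted vertex of $\tau$. To descend to a well-defined colouring of $C$, one needs trivial monodromy along every closed walk in $G$. Since $\Sbb^d$ is simply connected for $d \geq 2$, standard combinatorial topology implies that the closed walks in $G$ are generated, up to monodromy-trivial moves, by the link cycles around $(d-2)$-cells. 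The monodromy along the link cycle of $\sigma$ is the transposition of the two colours missing from $\sigma$, raised to the power of the cycle's length; this is trivial precisely when that length is even, which is hypothesis $(2)$. The main obstacle I anticipate is making this last step rigorous, i.e.\ translating simple connectivity of $\Sbb^d$ into the combinatorial statement that closed walks in the dual graph are generated by link cycles around $(d-2)$-cells; this should follow from a standard analysis of the dual $2$-complex, or from a direct homotopy argument on the $2$-skeleton of $C$.

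For the $d = 3$ moreover-part, $(1) \Rightarrow (3)$ follows by identifying the four vertex-colours with $\mathbb{F}_2^2$ and labelling each edge $uv$ by $c(u) + c(v) \in \mathbb{F}_2^2 \setminus \{0\}$: around any $2$-cell the three labels sum to $0$, so if two coincided the third would vanish, which is impossible, and hence they are the three distinct non-zero elements. Conversely $(3) \Rightarrow (1)$ by integrating: fix a root vertex $v_0$ with colour $0$ and set $c(v) = \sum_i \chi(e_i)$ along any path from $v_0$ to $v$. Well-definedness follows from simple connectivity of the $2$-skeleton of $C$ (each $2$-cell gives a relation summing to $0$ in $\mathbb{F}_2^2$), and properness is immediate from $c(u)+c(v) = \chi(uv) \neq 0$ for every edge $uv$.
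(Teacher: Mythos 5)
Your argument for $(1)\Leftrightarrow(2)$ is, at its core, the same as the paper's: you transport a local colouring of a chamber along edges of the dual graph and reduce to checking that the monodromy around each $(d-2)$-cell's link cycle is trivial; the paper packages this as the Local-Global Colouring Lemma (\autoref{lem:local-global-colouring}) together with \autoref{lem:dual-graph-generating-cycles} (the $f$-cycles generate all cycles of the dual graph, which is exactly the fact you flag as needing proof — the paper gets it from simple connectivity of the dual $2$-complex via Hatcher's Proposition 1.26). Your direction $(1)\Rightarrow(2)$ is more direct than the paper's, which detours through $\psi$-orientations of chambers and bipartiteness of the dual graph; your observation that the link of a $(d-2)$-cell is a properly $2$-coloured cycle gets there immediately and is arguably cleaner.

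One substantive difference: the paper does not actually prove the `moreover'-part (\ref{itm:main-heawood-3}); it is attributed to Carmesin and Nevinson and not reproved. Your $\mathbb{F}_2^2$-labelling argument for $d=3$ supplies a proof. It is correct: for $(1)\Rightarrow(3)$, the three edge-labels of a $2$-cell sum to zero in $\mathbb{F}_2^2$ and are pairwise distinct by properness, hence form the three nonzero elements; for $(3)\Rightarrow(1)$, well-definedness of the integration reduces to the sum of labels around each $2$-cell being zero (which holds because the three nonzero elements of $\mathbb{F}_2^2$ sum to zero) together with simple connectivity of the $2$-skeleton, and properness is automatic since $c(u)+c(v)=\chi(uv)\neq 0$. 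This is a genuinely nice self-contained argument that the paper does not provide.

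The one gap you should close before calling the proof complete is the generation statement for link cycles — in the paper this is \autoref{lem:dual-graph-generating-cycles}, proved by observing that the dual cell complex is simply connected, that its $2$-skeleton (the dual $2$-complex) is therefore simply connected, and that its $2$-cells are precisely the $f$-cycles. You correctly anticipate this is the step that needs a topological input, so you are not missing the idea, only the write-up.
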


In the 20 years since Joswig's result, no characterisations have been found for any $k>1$.
Heawood~\cite{heawood1898four} observed that the four-colour theorem is equivalent to the statement that every plane triangulation $G$ has a subdivision\footnote{A plane triangulation $G'$ is a \emph{subdivision} of a plane triangulation $G$ if $G'$ is obtained from $G$ by adding a vertex $v_f$ in some faces $f$ of $G$ and joining each $v_f$ to all vertices in the boundary of $f$.} $G'$ such that all vertices in $G'$ have degree divisible by three, see \autoref{fig:example-subdivision}.
Carmesin~\cite{carmesinpersonalcommunication} asked for a characterisation of the triangulations of $\Sbb^3$ that admit subdivisions\footnote{For the definition of a \emph{subdivision} of a triangulation of $\Sbb^d$ for $d \geq 3$ see \autoref{sec:subdivisions}.} such that the number of $2$-cells incident with each edge is divisible by three.
We answer his question, even more generally in arbitrary dimensions, and thereby also solve \autoref{que:1} for $k=2$.
\begin{figure}
    \centering
    \includegraphics[width=0.7\textwidth]{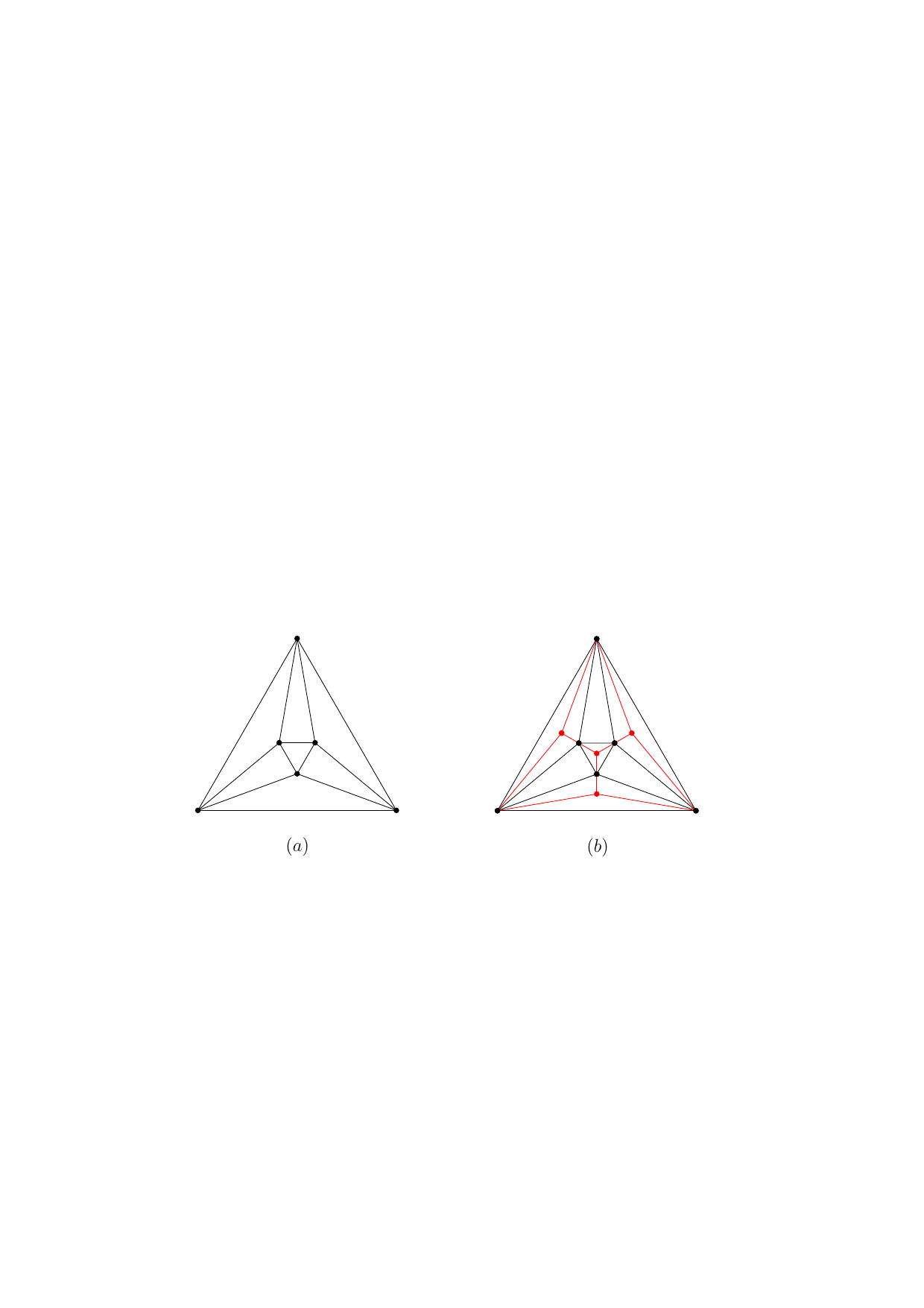}
    \caption{(a) A plane triangulation $G$ and (b) a subdivision $G'$ of $G$ such that all vertices have degree divisible by three. The added vertices and edges are red.}
    \label{fig:example-subdivision}
\end{figure}

\begin{mainresult}
    \label{thm:subdivisions}
    Let $d \geq 2$ be an integer and let $C$ be a triangulation of $\mathbb{S}^d$.
    Then the following assertions are equivalent.
    \begin{enumerate}
        \item\label{itm:main-subdivisions-1} The $1$-skeleton of $C$ has a proper $(d+2)$-colouring.
        \item\label{itm:main-subdivisions-2} There exists a subdivision $C'$ of $C$ such that for every $(d-2)$-cell, the number of incident $(d-1)$-cells is divisible by three.
    \end{enumerate}
    Moreover, if $d=3$, then we may add:
    \begin{enumerate}
        \setcounter{enumi}{2}
        \item\label{itm:main-subdivisions-3} The maximal subdivision of $C$ has a 2-edge-colouring such that every 2-cell  contains edges of both colours.
    \end{enumerate}
\end{mainresult}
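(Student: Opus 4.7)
My plan is to prove (1) $\Leftrightarrow$ (2) in arbitrary dimension and then, for $d=3$, to incorporate (3) via the maximal subdivision.

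For (1) $\Rightarrow$ (2), I would proceed constructively. Assume a proper $(d+2)$-colouring of the 1-skeleton of $C$, and for each chamber $\tau$ let $m(\tau)$ denote the unique colour absent from its vertices. For each $(d-2)$-cell $\sigma$, the link $L_\sigma$ is a cycle whose vertices are the $(d-1)$-cells incident to $\sigma$; these inherit a proper 3-colouring in the three colours not used by $\sigma$. A stellar subdivision of a chamber $\tau \supset \sigma$ inserts a new vertex of colour $m(\tau)$ into $L_\sigma$, lengthening the cycle by one while keeping the 3-colouring proper. The task thereby reduces to choosing, for each $(d-2)$-cell, enough stellar subdivisions among incident chambers to make the cycle length divisible by three, which I would encode as a system of congruences mod $3$ indexed by $(d-2)$-cells and solve using the global consistency of the $(d+2)$-colouring.

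For (2) $\Rightarrow$ (1), the subdivisions of \autoref{sec:subdivisions} do not split edges of $C$, so any $(d+2)$-colouring of the 1-skeleton of $C'$ restricts to one of $C$; it suffices to colour $C'$. My approach is an obstruction-theoretic argument modelled on Joswig's proof of \autoref{thm:heawood}, with $\mathbb{Z}/3$ replacing $\mathbb{Z}/2$. A proper $(d+2)$-colouring of the 1-skeleton corresponds to a simplicial map from $C'$ to the boundary of the standard $(d+1)$-simplex, which is a triangulated $\mathbb{S}^d$ with exactly three $(d-1)$-cells around every $(d-2)$-cell. Building such a map inductively over cells, the only local obstruction appears at each $(d-2)$-cell $\sigma$ and records $|L_\sigma| \bmod 3$; the divisibility hypothesis kills each local obstruction, and the triviality of the relevant $\mathbb{Z}/3$-cohomology of $\mathbb{S}^d$ then patches the local maps into a coherent global one. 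This is the step I expect to be the main obstacle, since a purely local divisibility condition has to be bridged by a topological argument.

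For the extension to $d=3$, I would derive (1) $\Rightarrow$ (3) and (3) $\Rightarrow$ (2), closing the cycle of implications. From a $5$-colouring of $C$ I would produce a 2-edge-colouring of the maximal subdivision by merging the three colour-pair classes arising around each edge (in analogy with the 3-edge-colouring appearing in the $d=3$ case of Joswig's theorem) down to two, arranged so that no 2-cell is monochromatic. Conversely, a 2-edge-colouring with both colours on every 2-cell encodes, around each edge, a mod-$3$ count of incident 2-cells compatible with the refinement structure of the maximal subdivision, from which condition (2) can be read off. These combinatorial steps I expect to be cleaner than the topological $(2) \Rightarrow (1)$ step.
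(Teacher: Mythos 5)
Your outline reproduces the paper's high-level architecture (prove $(1) \Leftrightarrow (2)$ for general $d$, then extend to $(3)$ when $d=3$), and your $(2) \Rightarrow (1)$ step has the correct topological kernel: the obstruction to globalizing the local colourings is killed by the simple connectivity of $\Sbb^d$. The paper makes this precise not with $\Zbb/3$-cohomology but via a \emph{gain graph} on the dual graph of $C'$ with gains in $\Sigma_{d+2}$ (the Local-Global Colouring Lemma); the relevant topological fact is that the $f$-cycles generate all cycles of the dual graph because the dual 2-complex is simply connected, and the divisibility-by-3 hypothesis makes every $f$-cycle balanced. Your ``$\Zbb/3$-cohomology'' is morally right but not the group the obstruction actually lives in.

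The genuine gap is in $(1) \Rightarrow (2)$. You correctly observe that stellar subdivision of a chamber $\tau \supset \sigma$ inserts a new vertex of colour $m(\tau)$ into $L_\sigma$ and lengthens that cycle by one, and you reduce the problem to choosing a set $T$ of chambers to subdivide so that, for every $(d-2)$-cell $\sigma$, the quantity $|L_\sigma| + |\{\tau \in T : \tau \supset \sigma\}|$ is divisible by $3$. But you do not identify \emph{which} set $T$ works, and it is not clear that the resulting system of constraints with binary variables is solvable without further input. The paper supplies the missing idea: the $(d+2)$-colouring $\psi$ assigns to each chamber $t$ a \emph{$\psi$-orientation} (the parity of the permutation sending positions to colours, with the missing colour appended), and one takes $T$ to be exactly the negatively oriented chambers. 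One then verifies that in $C'$ every chamber, including the $d+1$ new ones inside each subdivided simplex, is positively oriented, and that if all chambers around a $(d-2)$-cell share an orientation then the three colours missing from $\sigma$ appear in a fixed cyclic order $c_1,c_2,c_3,c_1,c_2,c_3,\dots$ around $L_\sigma$, forcing $3 \mid |L_\sigma|$. Without some such device your proper $3$-colouring of $L_\sigma$ alone gives no control on $|L_\sigma| \bmod 3$ (a $3$-coloured cycle can have any length $\geq 3$), so ``global consistency of the colouring'' needs to be unpacked into an actual construction.

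For $d=3$ the paper's route is also more concrete than yours: $(1)\Rightarrow(3)$ is a pullback of a $2$-edge-colouring of $K_5$ with no monochromatic triangle (Ramsey, $R_2(3)=6$), and $(3)\Rightarrow(2)$ passes through an intermediate statement that on each chamber's boundary each colour class is a Hamiltonian path, again closed with an orientation-parity argument on chambers. Your sketch of ``merging colour-pair classes'' and ``reading off a mod-$3$ count'' gestures in the right direction but would need this amount of detail to close.
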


In contrast to Joswig's result, that allows to construct a proper $(d+1)$-colouring for any given triangulation of $\Sbb^d$ in polynomial time, we show that deciding whether the 1-skeleton of a given triangulation of $\Sbb^d$ is $(d+2)$-colourable is $NP$-complete for each $d \geq 3$.

To prove \autoref{thm:subdivisions}, I independently found a stronger version of Joswig's method, in the form of the \nameref{lem:local-global-colouring}, see \autoref{lem:local-global-colouring} below.
Indeed, we use the \nameref{lem:local-global-colouring} to prove both \autoref{thm:subdivisions} and \autoref{thm:heawood}.

Roughly, the idea of the \nameref{lem:local-global-colouring} is as follows. Let $C$ be a triangulation of $\Sbb^d$, whose vertices we want to colour, and let $s_0$ be an arbitrary chamber of $C$.
We start by assigning distinct colours to the vertices on the boundary of $s_0$. Now suppose that for every two chambers $s$ and $t$ of $C$ sharing a $(d-1)$-cell we are given a map $g_{\vec{st}}$ that determines the colours of the vertices on the boundary of~$t$, given any colouring of the vertices on the boundary of~$s$. Then we greedily extend the colouring of the vertices on the boundary of $s_0$ to a colouring $c$ of $C$ via the maps $g_{\vec{st}}$.

The \nameref{lem:local-global-colouring} states that the colouring $c$ is well-defined and proper if the maps $g_{\vec{st}}$ are compatible in the following sense: 
On the one hand, each map $g_{\vec{st}}$ must fix the colours of the vertices of the $(d-1)$-cell shared by $s$ and $t$. On the other hand, cyclically going around a $(d-2)$-cell with the maps $g_{\vec{st}}$ yields the identity.
A collection of maps $g_{\vec{st}}$ that satisfy both conditions is called a \emph{proper canonical local colouring}, see \autoref{sec:local-global}.

\begin{lem}[Local-Global Colouring Lemma]
\label{lem:local-global-colouring}
    Let $d \geq 2$ be an integer.
    A triangulation of $\Sbb^d$ is canonically locally $k$-colourable if and only if its 1-skeleton is $k$-colourable.
\end{lem}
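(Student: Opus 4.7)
The plan is to prove both implications, with the forward direction (canonically locally colourable implies 1-skeleton colourable) carrying the topological content. Suppose $C$ is equipped with maps $\{g_{\vec{st}}\}$ satisfying the two compatibility conditions. Fix a chamber $s_0$ and an arbitrary proper $k$-colouring of its $d+1$ vertices. For any chamber $s$, choose a sequence of chambers $s_0 = t_0, t_1, \ldots, t_m = s$ in which consecutive pairs share a $(d-1)$-cell, and iteratively apply $g_{\vec{t_{i-1} t_i}}$ to obtain a candidate colouring of the vertices of $s$. I need to show that this candidate does not depend on the chosen sequence, and that the resulting global assignment is a proper $k$-colouring of the 1-skeleton.

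Path independence is the crux: it reduces to showing that for every closed chamber cycle the corresponding composition of $g$-maps is the identity. The topological input is that $\Sbb^d$ is simply connected for $d \geq 2$. Viewing the dual combinatorics as a 2-complex with chambers as vertices, shared $(d-1)$-cells as edges, and link cycles around $(d-2)$-cells as 2-cells, simple-connectedness of $\Sbb^d$ implies simple-connectedness of this dual 2-complex, so every closed dual path is combinatorially homotopic to a concatenation of link cycles of $(d-2)$-cells. Condition~2 says exactly that each such generator acts as the identity, so every closed dual path does. Once the per-chamber colourings are well defined, each vertex $v$ receives a globally consistent colour: the chambers containing $v$ form a connected subcomplex of the dual graph, because the link of $v$ is a connected $(d-1)$-sphere for $d \geq 2$, and condition~1 forces every $g_{\vec{st}}$ between such chambers to preserve the colour of $v$. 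Each chamber ends up with a proper colouring because the $g$-maps, being colour permutations, preserve distinctness of the $d+1$ colours used; since every edge of the 1-skeleton lies in some chamber, the induced global colouring of the 1-skeleton is proper.

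For the backward direction, given a proper $k$-colouring $c$ of the 1-skeleton, I would define $g_{\vec{st}}$ as the permutation of the colour set transposing $c(v_s)$ and $c(v_t)$, where $v_s$ and $v_t$ are the vertices of $s$ and $t$ not contained in the shared $(d-1)$-cell. Condition~1 holds because each shared vertex $v$ is adjacent inside the simplex $s$ to $v_s$ and inside the simplex $t$ to $v_t$, so $c(v)$ is distinct from both $c(v_s)$ and $c(v_t)$ and is fixed by the transposition. Condition~2 follows by direct computation: along any cycle of chambers $s_1, \ldots, s_n$ around a $(d-2)$-cell, the propagation of $c|_{s_1}$ under the iterated transpositions reproduces $c|_{s_i}$ on each $s_i$ in turn, so after one full cycle we recover $c|_{s_1}$. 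The main obstacle in the whole argument is the simple-connectedness step in the forward direction; I would justify it either by invoking the standard presentation of $\pi_1$ of a CW-complex from its 2-skeleton, noting that the dual 2-complex above has the same $\pi_1$ as $\Sbb^d$ and hence is trivial, or by a direct combinatorial nullhomotopy that replaces subpaths of any dual loop using the boundary cycles of $(d-2)$-cell links.
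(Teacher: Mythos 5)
Your forward direction is essentially the paper's argument: reduce path-independence to balancedness on the link cycles of $(d-2)$-cells, and supply that balancedness from simple-connectedness of $\Sbb^d$ transferred to a dual $2$-complex whose $2$-cells are the link cycles; then use the agreement condition, together with connectedness of the link of each vertex, to extract a globally consistent vertex colour. (You have swapped the numbering of the two compatibility conditions relative to the paper's formal \autoref{dfn:locally-colourable}, but that is immaterial.) This matches the paper's route through \autoref{lem:dual-graph-generating-cycles} and \autoref{lem:colouring-respects-edges}.

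The backward direction, however, has a genuine gap. You define $g_{\vec{st}}$ to be the transposition of $c(v_s)$ and $c(v_t)$, and you verify the two conditions only on the particular colouring $c$. But the paper's definition requires the maps $g_{\vec{e}}\colon\Sigma_k\to\Sigma_k$ to satisfy the agreement condition for \emph{every} $\pi_s\in\Sigma_k$, and balancedness to mean that the composition is the identity map, not merely that it fixes one distinguished element. Your transpositions fail both. Concretely, take $d=2$, $k=4$, and the boundary of a tetrahedron with a vertex $v$ coloured $0$ and its three neighbours $w_1,w_2,w_3$ coloured $1,2,3$. Going around the link cycle of $v$, your three transpositions are $(3\,2)$, $(1\,3)$, $(2\,1)$, and their composite is $(1\,3)\neq\mathrm{id}$, so the $f$-cycle is not balanced. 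Worse, propagating a local colouring of $vw_1w_3$ other than $c$ (e.g.\ $v\mapsto 0$, $w_1\mapsto 2$, $w_3\mapsto 1$) around this cycle via your rule produces a non-injective assignment on return to the starting chamber. So the constructed data is not a proper canonical local $k$-colouring at all. The correct construction is the one the paper uses: fix a permutation $\pi_t\in\Sigma_k$ representing $c$ on each chamber $t$ and set $g_{\vec{e}}(\sigma_s)=\sigma_s\circ\pi_s^{-1}\circ\pi_t$; this telescopes to the identity around every $f$-cycle and agrees on shared faces for all $\sigma_s$, not just for $c$.
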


\subsection{Related work}
There exists a rich literature on extensions of theorems from two dimensions to three dimensions.
For example, Carmesin~\cite{carmesin2023embedding} proved a 3-dimensional analogue of Kuratowski's theorem.
Carmesin and Mihaylov~\cite{carmesin2021outerspatial} extended the concept and excluded-minors characterisation of outerplanar graphs.
Kurkofka and Nevinson~\cite{Kurkofka_2025} asked for the least integer $k$ such that every simplicial $2$-complex embedded in $\Sbb^3$ has a $k$-edge-colouring, and showed $k \leq 12$.
Georgakopoulos and Kim~\cite{Georgakopoulos_2022} extended Whitney’s Theorem on unique embeddings of 3-connected planar graphs.

\subsection{Organization of the paper}
In \autoref{sec:definitions} we introduce the necessary definitions and terminology that will be used in this paper.
In \autoref{sec:local-global} we prove the \nameref{lem:local-global-colouring} (\ref{lem:local-global-colouring}).
In \autoref{sec:heawood} we prove (\ref{itm:main-heawood-1}) $\Leftrightarrow$ (\ref{itm:main-heawood-2}) of \autoref{thm:heawood}.
In \autoref{sec:subdivisions} we prove (\ref{itm:main-subdivisions-1}) $\Leftrightarrow$ (\ref{itm:main-subdivisions-2}) of \autoref{thm:subdivisions} and show that for each $d \geq 3$ it is $NP$-complete to decide whether the 1-skeleton of a given triangulation of~$\Sbb^d$ is $(d+2)$-colourable.
In \autoref{sec:2-edge-colouring} we prove the `moreover'-part of \autoref{thm:subdivisions}.
In \autoref{sec:edge-colourings}, we show how to obtain edge-colourings and face-colourings from vertex-colourings of triangulations of $\Sbb^d$.
Then, we conclude the paper with open problems in \autoref{sec:outlook}.

\section{Definitions and Terminology}
\label{sec:definitions}

\subsection{Topology}
\label{subsec:topology}

For the background in algebraic topology, we refer to Hatcher's book~\cite{hatcheralgebraic}.

We assume familiarity with~\cite[Sections 2.4, 2.5 and 4.1]{Diestel_2022}.
In particular, let $G$ be a graph and $\cC$ be a set of cycles in $G$.
If $v$ is a vertex in $G$ then $\pi_1^\cC(G,v)$ is the subgroup of $\pi_1(G,v)$ generated by cycles in $\cC$.
Moreover, $\cC$ \emph{generates} a cycle $c$ in $G$ if there exists a vertex $v$ in $G$ such that some walk in $\pi_1^{\cC}(G,v)$ stems from $c$.

Given a cell complex $C$ embedded in $\Sbb^d$, a \emph{chamber} of $C$ is a connected component of $\Sbb^d \setminus C$.
A \emph{triangulation of $\Sbb^d$} is a simplicial $(d-1)$-complex embedded in $\Sbb^d$ such that all chambers are $d$-simplices.
In this paper we only consider triangulations of $\Sbb^d$ with $d \geq 2$.
Throughout this paper we assume that every triangulation of $\Sbb^d$ comes with an ordering $<$ of its vertices.
The \emph{$d'$-skeleton} of a cell complex $C$ is the cell complex $C'$ consisting of all $i$-cells in $C$ with $i \leq d'$.

We say that a vertex $v$ is a vertex of the chamber $t$ if it lies in the boundary of $t$.
Analogously, we say that $v$ is a vertex of a cell $f$ if $v$ is contained in $f$.
Two chambers $s$ and $t$ \emph{share} the $d'$-cell~$f$ if~$f$ is included in the boundary of both $s$ and $t$.
If a chamber $t$ has vertices $v_1,v_2,\ldots,v_k$ on its boundary, we say that $t$ is a chamber \emph{on the vertices} $v_1,v_2,\ldots,v_k$.

Let $C$ be a triangulation of $\Sbb^d$ and $\tilde C$ denote the cell complex embedded in $\Sbb^d$ that is obtained from $C$ by adding all chambers as $d$-simplices.
For the definition of the \emph{dual cell complex~$C^*$} of~$\tilde C$ see~\cite[Chapter X]{Seifert_1980} and~\cite[Section 3.3]{hatcheralgebraic}.
In particular, $C^*$ is again embedded in $\Sbb^d$ and every $d'$-cell of $\tilde C$ (for $0 \leq d' \leq d$) corresponds to a $(d-d')$-cell of $C^*$.
Given a triangulation $C$ of $\Sbb^d$, the \emph{dual 2-complex} of $C$ is the 2-skeleton of the dual cell complex of $C$ and the \emph{dual graph} of $C$ is the 1-skeleton of the dual cell complex of $C$.
Note that the dual graph of $C$ may have parallel edges.

Observe that every $(d-1)$-cell in a triangulation $C$ of $\Sbb^d$ is included in the boundary of exactly two chambers.
Let $f$ be a $(d-2)$-cell in $C$ and $c = t_0 f_1 t_1 f_2 t_2 \dots t_{k-1} f_k t_k$ be the cyclic ordering of the $(d-1)$-cells $f_1,\dots,f_k$ and chambers $t_0,t_1,\dots,t_k=t_0$ around $f$ induced by the embedding of the triangulation $C$ in $\mathbb{S}^d$.
Then $c$ is a cycle in the dual graph $G$ of $C$, which we call the \emph{$f$-cycle in $G$}.

The dual graph $G$ of a triangulation $C$ of $\Sbb^d$ can also be constructed as follows.
The set $V(G)$ of vertices is the set of chambers of $C$.
For each $(d-1)$-cell that lies on the boundary of the two chambers $s$ and $t$ of $C$ we add an edge $st$ to $G$.
The dual 2-complex $D$ of $C$ can be obtained from the dual graph $G$ of $C$ by adding all $f$-cycles (for all $(d-2)$-cells $f$ in $C$) as 2-cells to $D$.

It is well-known that the $d$-dimensional sphere $\Sbb^d$ is orientable.
We assume throughout this paper that we are given $\Sbb^d$ together with an orientation.
Let $C$ be a triangulation of $\Sbb^d$ and $t$ be a chamber of $C$.
The orientation of $\Sbb^d$ induces an orientation on every $d$-simplex embedded in $\Sbb^d$ and therefore on the chamber $t$ (which we can also regard as a $d$-simplex).
This orientation of $t$ determines a vertex ordering $v_0,v_1,\dots,v_d$ of the vertices in the boundary of $t$ up to a permutation of even parity.
We say that such a vertex ordering determined by the orientation of $\Sbb^d$ is \emph{positive}, and all other vertex orderings of $t$ are called \emph{negative}.
See~\cite{hatcheralgebraic, Munkres_2018} for more details.

\subsection{Gain graphs}

Throughout this paper we assume that all directed graphs have no loops.
For a directed graph $G$ and a pair $\vec e = uv$ of vertices, we define $\cev e = vu$.
A directed graph $G$ is \emph{symmetric} if for every pair $\vec e = uv$ of vertices, $\vec e \in E(G)$ if and only if $\cev e \in E(G)$.
Let $(\Gamma, \cdot)$ be a finite group with neutral element $\unit$.
A \emph{gain graph} is a symmetric directed graph G together with an assignment of weights $a_{\Vec{e}}\in \Gamma$ to every directed edge $\Vec{e}$ such that $a_{\cev{e}} = a_{\Vec{e}}^{-1}$.

Let $G$ be a gain graph and $W = v_0 \vec e_1 v_1 \vec e_2 v_2 \dots v_{k-1} \vec e_k v_k$ be a walk in $G$ with $\vec e_i = v_{i-1} v_{i}$ all $i$.
The \emph{gain value $\ell_W$} of the walk $W$ is defined by $\ell_W = a_{\vec e_k} \dots a_{\vec e_2} a_{\vec e_1} \in \Gamma$.
Note that, in general, the gain value of a closed walk depends on the start point and the direction of the closed walk.
A closed walk $c$ is \emph{balanced} if $\ell_c = \unit$, which is independent of the start point and direction of the closed walk.

\section{The Local-Global Colouring Lemma}
\label{sec:local-global}

In this section, we prove the \nameref{lem:local-global-colouring} (\autoref{lem:local-global-colouring}).

\begin{fact}
\label{fact:generating-cycles-gain-value}
    Let $\Gamma$ be a group.
    Let $G$ be a gain graph with weights in $\Gamma$ and $\cC$ be a set of cycles of $G$ that generate all cycles of $G$.
    If every cycle in $\cC$ is balanced, then every cycle in $G$ is balanced. \qed
\end{fact}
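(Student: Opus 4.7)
The plan is to interpret the gain-value assignment as defining a subgroup of the fundamental group, and to transfer balance from the generating cycles in $\cC$ to all cycles of $G$ through this group-theoretic framework. Fix a basepoint $v$ in the connected component of $G$ under consideration. The relation $a_{\cev e} = a_{\vec e}^{-1}$ ensures that inserting or deleting a back-and-forth along a single edge leaves the gain value unchanged, so the assignment $W \mapsto \ell_W$ descends to a well-defined function on $\pi_1(G,v)$. Since reversing a walk inverts its gain value and concatenation multiplies gain values (in the reverse order dictated by the paper's convention for $\ell_W$), the preimage $K_v := \{[W] \in \pi_1(G,v) : \ell_W = \unit\}$ of the identity is a subgroup of $\pi_1(G,v)$.

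I would then show that $\pi_1^\cC(G,v) \le K_v$. For a cycle $c \in \cC$ that passes through $v$, this is immediate, since balance of a cycle is independent of the basepoint on it. For a cycle $c \in \cC$ that does not pass through $v$, choose a path $P$ in $G$ from $v$ to some vertex on $c$; the closed walk $P \cdot c \cdot P^{-1}$ at $v$ has gain value a conjugate of $\ell_c = \unit$ in $\Gamma$, hence is $\unit$ as well, so its class lies in $K_v$. Such conjugates of cycles in $\cC$ generate $\pi_1^\cC(G,v)$, giving the desired inclusion.

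To finish, let $c$ be an arbitrary cycle of $G$. By the hypothesis that $\cC$ generates every cycle, there are a basepoint $v$ and a walk $W \in \pi_1^\cC(G,v) \le K_v$ that stems from $c$. A walk stemming from $c$ differs from $c$ only by pairs of edges traversed once in each direction outside of $c$, and each such pair contributes $a_{\vec e}\, a_{\cev e} = \unit$ to the gain value, so $\ell_c = \ell_W = \unit$ and $c$ is balanced. The only slightly delicate point is matching the Diestel notion of ``stems from'' with this cancellation argument; beyond that, the proof is a routine bookkeeping exercise with the fundamental group, and I do not anticipate a deeper obstacle.
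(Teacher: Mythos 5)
The paper gives this fact without proof, relying on the Diestel fundamental-group framework cited at the start of \autoref{sec:definitions}; your argument supplies exactly those standard details, so it is the intended proof. The only point that needs tightening is the very last step: a walk $W$ stemming from $c$ reduces, after cancelling backtracks, to a walk of the form $P\,c'\,P^{-1}$ where $c'$ is a cyclic rotation of $c$ and $P$ is a (possibly trivial) path, so its gain value is $\ell_W=\ell_P^{-1}\,\ell_{c'}\,\ell_P$. The two traversals of each edge of $P$ are \emph{nested} around $c'$, not adjacent, so in a non-abelian $\Gamma$ they do not cancel pairwise and $\ell_W$ need not equal $\ell_{c'}$ (only the backtracks removed during reduction cancel in the way you describe). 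What does hold, and is all you need, is that $\ell_W=\unit$ forces $\ell_{c'}=\ell_P\,\unit\,\ell_P^{-1}=\unit$; since changing the basepoint on a cycle is again a conjugation, $\ell_{c'}=\unit$ gives $\ell_c=\unit$. With that adjustment the proof is complete and correct.
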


\begin{lem}
\label{lem:dual-graph-generating-cycles}
    Let $G$ be the dual graph of a triangulation $C$ of $\Sbb^d$.
    Then the set of all $f$-cycles in~$G$ (for $(d-2)$-cells $f$ in $C$) generates all cycles in $G$.
\end{lem}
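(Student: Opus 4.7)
The plan is to interpret $G$ as the $1$-skeleton of a simply connected CW complex whose $2$-cells are attached along the $f$-cycles, and then to apply the standard CW-complex presentation of the fundamental group.

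By the definitions recalled in \autoref{subsec:topology}, the dual cell complex $C^*$ is a CW decomposition of $\Sbb^d$; its $1$-skeleton is the dual graph $G$, and its $2$-skeleton is the dual $2$-complex $D$. Moreover, the explicit description of $D$ given in \autoref{subsec:topology} shows that the $2$-cells of $D$ are attached to $G$ precisely along the $f$-cycles, one for each $(d-2)$-cell $f$ of $C$. Hence it suffices to show that the attaching cycles of the $2$-cells of $D$ generate all cycles in $G$.

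At this point I would invoke two standard facts from algebraic topology (see \cite{hatcheralgebraic}). First, attaching cells of dimension at least $3$ does not change the fundamental group, so $\pi_1(D,v) \cong \pi_1(C^*,v) \cong \pi_1(\Sbb^d)$, and the right-hand side is trivial since $d \geq 2$. Second, for a $2$-dimensional CW complex $D$ with $1$-skeleton $G$, the group $\pi_1(D,v)$ is the quotient of the free group $\pi_1(G,v)$ by the normal subgroup generated by the attaching loops of the $2$-cells (each conjugated to the basepoint $v$ along some path). Combining the two, $\pi_1(G,v)$ is normally generated by the $f$-cycles, which by the definition of ``generates'' recalled in \autoref{subsec:topology} is exactly the conclusion that the $f$-cycles generate every cycle of $G$.

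The main issue will be bookkeeping rather than substance: one needs to check that the paper's combinatorial definition via the subgroup $\pi_1^{\cC}(G,v)$ and walks stemming from cycles, following~\cite{Diestel_2022}, coincides with the topological notion of normal generation inside $\pi_1(G,v)$. This translation is essentially built into the Diestel setup, since elements of $\pi_1^{\cC}(G,v)$ are formed by choosing paths from $v$ to cycles in $\cC$ and conjugating, which produces exactly the normal closure of $\cC$.
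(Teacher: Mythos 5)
Your proposal is correct and follows essentially the same route as the paper: both pass from $C^*$ (simply connected as a CW decomposition of $\Sbb^d$) to its $2$-skeleton $D$ using Hatcher's Proposition 1.26(b), identify the attaching cycles of $D$'s $2$-cells with the $f$-cycles, and then invoke Hatcher's Proposition 1.26(a) to conclude that these normally generate $\pi_1(G,v)$. The only cosmetic difference is that you phrase the last step explicitly as a quotient presentation of $\pi_1$, while the paper compresses both directions into citing Proposition 1.26.
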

\begin{proof}
    Let $\tilde C$ be the simplicial $d$-complex that is obtained from $C$ by adding all chambers as $d$-simplices.
    Let $C^*$ be the dual cell complex of $\tilde C$ and recall that the $2$-skeleton of $C^*$ is the dual $2$-complex $D$ of $C$.
    Observe that $C^*$ is simply connected.
    Then it follows that the $2$-skeleton of~$C^*$, i.e.\ the dual $2$-complex $D$, is again simply connected, see Hatcher~\cite[Proposition 1.26 (b)]{hatcheralgebraic}.
    By construction, the face cycles (i.e.\ the boundaries of the 2-cells) of $D$ are exactly the cycles in~$\cC$.
    Since $D$ is simply connected, the face cycles of $D$ generate all cycles in the 1-skeleton of $D$ (see~\cite[Proposition 1.26 (a)]{hatcheralgebraic}) and therefore in $G$.
\end{proof}

The following corollary follows immediately from \autoref{fact:generating-cycles-gain-value} and \autoref{lem:dual-graph-generating-cycles}.
\begin{cor}
\label{cor:group-cycle-gain-value-dual-graph}
    Let $G$ be a dual gain graph of a triangulation of $\mathbb{S}^d$.
    If every $f$-cycle in $G$ is balanced, then every cycle in $G$ is balanced. \qed
\end{cor}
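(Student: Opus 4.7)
The plan is to combine \autoref{lem:dual-graph-generating-cycles} and \autoref{fact:generating-cycles-gain-value} directly. Let $\cC$ denote the collection of all $f$-cycles of $G$, where $f$ ranges over the $(d-2)$-cells of the underlying triangulation. By \autoref{lem:dual-graph-generating-cycles}, $\cC$ generates every cycle of $G$. Under the hypothesis of the corollary, every cycle in $\cC$ is balanced. Applying \autoref{fact:generating-cycles-gain-value} to the gain graph $G$ with this generating set then yields at once that every cycle of $G$ is balanced.

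The only thing worth double-checking is that the two preceding results use the same notion of ``generating a cycle'', namely the one introduced via the subgroup $\pi_1^\cC(G,v)$ of the fundamental group at the start of \autoref{subsec:topology}. Because gain values assemble into a group homomorphism from $\pi_1(G,v)$ to $\Gamma$, triviality of each generator forces triviality on the whole subgroup they generate, and hence on every cycle represented there. There is no real obstacle: the substantive work has already been done in the preceding lemma and fact, and this corollary merely records the combined consequence in the form that will be convenient later when reasoning about canonical local colourings.
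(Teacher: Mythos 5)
Your proposal matches the paper exactly: the corollary is stated there as an immediate consequence of \autoref{lem:dual-graph-generating-cycles} and \autoref{fact:generating-cycles-gain-value}, applied with $\cC$ the set of all $f$-cycles. Your extra remark about gain values giving a homomorphism on $\pi_1(G,v)$ correctly explains why the fact holds, but is not needed beyond citing it.
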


For a finite set $X$, we define an \emph{$X$-gain graph} to be a gain graph where the weights on the edges are permutations of $X$, i.e.\ for every directed edge $\vec e \in E(G)$ there exists a bijective map $g_{\vec{e}} \colon X \to X$ such that $g_{\cev e} = g_{\vec e}^{-1}$.
Again, we say that an $X$-gain graph is a \emph{dual $X$-gain graph} of a triangulation $C$ of $\Sbb^d$ if the corresponding undirected graph is the dual graph of $C$.

Let $G$ be an $X$-gain graph with bijective maps $g_{\vec e} \colon X \to X$.
We say that a colouring $\phi \colon V(G) \to X$ of the vertices of $G$ with elements of $X$ \emph{commutes} with $g_{\vec e}$ for some edge $\vec e = uv$ in $G$ if $g_{\vec e}(\phi(u)) = \phi(v)$ holds.
Observe that if $\phi$ commutes with $g_{\vec e}$, it also commutes with $g_{\cev e}$ since $g_{\cev e}(\phi(v)) = g_{\vec e}^{-1}(\phi(v)) = \phi(u)$.

\begin{lem}
\label{lem:colouring-respects-edges}
    Let $G$ be an $X$-gain graph with maps $g_{\vec e}$ assigned to the directed edges $\vec e$.
    Let $\cC$ be a set of cycles of $G$ that generate all cycles in $G$.
    If every cycle in $\cC$ is balanced, then there exists a colouring $\phi \colon V(G) \to X$ of $G$ with elements of $X$ that commutes with all maps $g_{\vec e}$.
\end{lem}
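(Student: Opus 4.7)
The plan is to construct $\phi$ by parallel transport along walks. First I would apply \autoref{fact:generating-cycles-gain-value} to upgrade the hypothesis: since $\cC$ generates all cycles of $G$ and each cycle in $\cC$ is balanced, every cycle in $G$ is balanced. Without loss of generality $G$ is connected, since otherwise the argument below applies independently to each connected component.

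Next I would fix a base vertex $v_0 \in V(G)$ and an arbitrary colour $x_0 \in X$. For every $v \in V(G)$ I choose some walk $W_v$ from $v_0$ to $v$ and set $\phi(v) := \ell_{W_v}(x_0)$. Once $\phi$ is shown to be well-defined, the commutation condition is immediate: for any edge $\vec e = uv$, the walk obtained by appending $\vec e$ to $W_u$ is a walk from $v_0$ to $v$, so by well-definedness $\phi(v) = g_{\vec e}(\ell_{W_u}(x_0)) = g_{\vec e}(\phi(u))$.

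The main obstacle is well-definedness of $\phi$: given two walks $W$ and $W'$ from $v_0$ to $v$, I need $\ell_W = \ell_{W'}$, equivalently that the closed walk $W \cdot (W')^{-1}$ has gain $\unit$. The hypothesis gives balance only for cycles, so I would invoke the standard strengthening that in any gain graph in which every cycle is balanced, every closed walk is balanced. This I would prove by iteratively cancelling backtracks of the form $\vec e\, \cev e$, each of which contributes $g_{\cev e} \circ g_{\vec e} = \mathrm{id}_X$ and so leaves the gain value unchanged, until one arrives at a reduced closed walk. Using a spanning tree of $G$, such a reduced closed walk decomposes (up to conjugation by tree-paths, which cancel) as a concatenation of fundamental cycles; since each such cycle is balanced by the upgraded hypothesis, the overall gain is $\unit$. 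This completes the argument.
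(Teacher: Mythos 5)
Your proof is correct and follows essentially the same route as the paper: both fix a base vertex, transport a colour along a spanning tree (or, in your phrasing, along arbitrary walks), and invoke balance of fundamental cycles via \autoref{fact:generating-cycles-gain-value}. The paper defines $\phi$ directly via tree paths and then checks commutation on non-tree edges using fundamental cycles, which lets it skip your intermediate step of reducing backtracks to show that every closed walk is balanced; your version establishes a slightly stronger well-definedness statement but is otherwise the same argument.
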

\begin{proof}
    Since the cycles in $\cC$ generate all cycles in $G$, we have that all cycles in $G$ are balanced by \autoref{fact:generating-cycles-gain-value}.
    Let $u \in V(G)$ be an arbitrary vertex in $G$ to which we assign an arbitrary colour $\phi(u) \in X$.
    Let $T$ be a spanning tree of $G$.
    For each vertex $v$ in $G$, let $P_v = u \vec e_1 v_1 \dots v_{k-1} \vec e_k v$ be the path in $T$ from $u$ to $v$.
    We define the colour of $v$ to be $\phi(v) = (g_{\vec e_k} \circ \dots \circ g_{\vec e_1})(\phi(u))$.
    Clearly, this colouring commutes with all maps $g_{\vec e}$ where $\vec e$ is an edge of the spanning tree $T$.
    For an edge $\vec{e'} = w_1 w_2 \in E(G) \setminus E(T)$, let $P$ be the path from $w_1$ to $w_2$ in $T$.
    Since the cycle $c = w_1 P w_2 \cev{e'} w_1$ is balanced (i.e.\ has gain value $g_c = \unit$) and $\phi$ commutes with all $g_{\vec e}$ for edges $\vec e$ in $T$, it also commutes with $g_{\vec{e'}}$.
    Therefore, the colouring $\phi$ of the vertices of $G$ commutes with all $g_{\vec e}$.
\end{proof}

Now, we formally define what it means for a triangulation of $\Sbb^d$ to be canonically locally $k$-colourable.
Recall that we assume that every triangulation of $\Sbb^d$ comes with an ordering $<$ of its vertices.
We denote by $\Sigma_{k}$ the symmetric group whose elements are the permutations on $\{0,1,\dots,k-1\}$.
Let $C$ be a triangulation of $\Sbb^d$ and let $k \geq d+1$ be an integer.
Given a chamber~$t$ of~$C$ and a permutation $\pi \in \Sigma_{k}$, we call $(t,\pi)$ \emph{colouring} of $t$.
If the chamber $t$ has vertices $u_0 < u_1 < \dots < u_d$ in its boundary, the colour of $u_i$ \emph{induced} by $(t, \pi)$ is $\pi(i)$.
Given two chambers $t$ and $t'$ incident to the same $d'$-cell~$f'$ and two permutations~$\pi,\pi' \in \Sigma_{k}$, we say that the colourings $(t,\pi)$ and $(t',\pi')$ \emph{agree} on $f'$ if, for each vertex $u$ of $f'$, the colour of $u$ induced by $(t,\pi)$ is equal to the colour of $u$ induced by $(t',\pi)$.
\begin{dfn}[Canonically Locally Colourable]
\label{dfn:locally-colourable}
    Let $C$ be a triangulation of $\Sbb^d$ and $k \geq d+1$.
    A \emph{proper canonical local $k$-colouring} of $C$ is a dual $\Sigma_{k}$-gain graph $G$ of $C$ with bijective maps $g_{\vec e} \colon \Sigma_{k} \to \Sigma_{k}$ on the directed edges of $G$ such that the following two conditions hold.
    \begin{enumerate}[(1)]
        \item\label{itm:locally-colourable-1} For every $(d-2)$-cell $f$ in $C$, the $f$-cycle $c$ in $G$ is balanced.
        \item\label{itm:locally-colourable-2} For every $(d-1)$-cell $f'$ in $C$ and corresponding edge $\vec e = st$ in $G$, each colouring $(s,\pi_s) \in \Sigma_{k}$ agrees with the colouring $(t,g_{\vec e}(\pi_s))$ on $f'$.
    \end{enumerate}
    We say that $C$ is \emph{canonically locally $k$-colourable} if there exists a proper canonical local $k$-colouring of $C$.
\end{dfn}

\begin{proof}[Proof of the \nameref{lem:local-global-colouring} (\autoref{lem:local-global-colouring})]
    ($\Rightarrow$)
    Let $C$ be a triangulation of $\Sbb^d$ that is canonically locally $k$-colourable.
    That is, there exists a dual $\Sigma_{k}$-gain graph $G$ with bijective maps $g_{\vec e} \colon \Sigma_{k} \to \Sigma_{k}$ on the directed edges $\vec e$ that satisfy \autoref{dfn:locally-colourable} (\ref{itm:locally-colourable-1}) and (\ref{itm:locally-colourable-2}).
    We need to show that the 1-skeleton of $C$ has a proper $k$-colouring.
    
    By \autoref{lem:dual-graph-generating-cycles} and \autoref{lem:colouring-respects-edges}, there exists a colouring of the vertices of $G$ with elements from~$\Sigma_{k}$ that commutes with all maps $g_{\vec e}$.
    We denote by $\pi_t \in \Sigma_{k}$ the colour assigned to the vertex~$t$ of~$G$.

    In the first step, we show that for each vertex $u$ of $C$, the colourings $(t,\pi_t)$ of all incident chambers $t$ agree on $u$.
    Let $s$ and $t$ be two chambers incident to $u$.
    Then there exists a path $t_0 t_1 t_2 \dots t_{m-1} t_m$ in the dual graph $G$ with $t_0 = s$ and $t_m = t$ such that the chambers $t_0,t_1,\dots,t_k$ are all incident to $u$.
    By \autoref{dfn:locally-colourable} (\ref{itm:locally-colourable-2}), the colourings $(t_i,\pi_{t_i})$ and $(t_{i+1},\pi_{t_{i+1}})$ of two adjacent chambers $t_i$ and $t_{i+1}$ agree on the colour of $u$.
    Hence, all colourings $(t_i, \pi_{t_i})$ agree on the colour of $u$.
    It follows that any two chambers incident to $u$ agree on the colour of $u$.

    Then we define the colour of $u$ to be the colour of $u$ induced by the colourings of the incident chambers.
    This is a colouring of the $1$-skeleton with elements in $\{0,1,\dots,k-1\}$.
    To see that this colouring is proper, let $u_1 u_2$ be an edge in the $1$-skeleton of $C$.
    Then there exists a chamber $s$ that is incident to both $u_1$ and $u_2$.
    Since $s$ is coloured with an element $\pi_s \in \Sigma_{k}$, it assigns distinct colours to $u_1$ and $u_2$.

    ($\Leftarrow$)
    Let $\psi \colon V \to \{0,1,\dots,k-1\}$ be a proper $k$-colouring of the 1-skeleton of $C$.
    For each chamber $t$ of $C$, we fix a colouring $(t,\pi_t)$ with $\pi_t \in \Sigma_{k}$ such that for each vertex $v$ in the boundary of $t$, the colour of $u$ induced by $(t,\pi_t)$ is $\psi(u)$.
    To construct a proper canonical local $(k+1)$-colouring of $C$, we have to define a dual $\Sigma_{k}$-gain graph of $C$.
    For each edge $\vec e = st$ in the dual graph $G$ and for each $\sigma_s \in \Sigma_{k}$, we define $g_{\vec e}(\sigma_s) = \sigma_s \circ \pi_s^{-1} \circ \pi_t$.
    
    Obviously, $g_{\vec e}$ is a bijective map.
    Moreover, observe that if $\sigma_t = g_{\vec e}(\sigma_s)$ then $g_{\cev e}(\sigma_t) = \sigma_t \circ \pi_t^{-1} \circ \pi_s = \sigma_s$ since $\sigma_t \circ \pi^{-1}_t = \sigma_s \circ \pi^{-1}_s$.
    This proves $g_{\vec e} = g_{\cev e}^{-1}$ for all edges $\vec e$.
    It remains to check that \autoref{dfn:locally-colourable} (\ref{itm:locally-colourable-1}) and (\ref{itm:locally-colourable-2}) are satisfied.
    
    (\ref{itm:locally-colourable-1}) Let $f$ be a $(d-2)$-cell in $C$ with corresponding $f$-cycle $c = t_0 \vec e_1 t_1 \dots t_{\ell-1} \vec e_{\ell - 1} t_0$ in the dual graph $G$.
    Let $\sigma_{t_0} \in \Sigma_{k}$ be arbitrary.
    Then $g_c(\sigma_{t_0}) = (g_{\vec e_{\ell - 1}} \circ \cdots \circ g_{\vec e_1})(\sigma_{t_0}) = \sigma_{t_0} \circ (\pi_{t_0}^{-1} \circ \pi_{t_1}) \circ (\pi_{t_1}^{-1} \circ \pi_{t_2}) \circ \cdots \circ (\pi_{t_{\ell-1}}^{-1} \circ \pi_{t_0}) = \sigma_{t_0}$.
    
    (\ref{itm:locally-colourable-2}) Let $s$ and $t$ be two chambers in $C$ that share a $(d-1)$-cell $f$ with corresponding edge $\vec e = st$ in the dual graph $G$.
    Let $u$ be an arbitrary vertex of $f$ with index $i$ in the ordering of the vertices of $s$ and with index $j$ in the ordering of the vertices of $t$.
    Observe that $\pi_s(i) = \pi_t(j) = \psi(u)$.
    Let $\sigma_s \in \Sigma_{k}$ be arbitrary.
    By definition, the colouring $(s,\sigma_s)$ of $s$ induces the colour $\sigma_s(i)$ on $u$.
    Then,
    \[
        (g_{\vec e}(\sigma_s))(j) = (\sigma_s \circ \pi_s^{-1} \circ \pi_t)(j) = \sigma_s(\pi_s^{-1}(\pi_t(j))) = \sigma_s(\pi_s^{-1}(\pi_s(i))) = \sigma_s(i)~,
    \]
    which proves that $(s,\sigma_s)$ and $(t,\sigma_t = g_{\vec e}(\pi'_s))$ induce the same colour on the vertex $u$.
\end{proof}

\section{Proof of (\ref{itm:main-heawood-1}) \texorpdfstring{$\Leftrightarrow$}{<=>} (\ref{itm:main-heawood-2}) of \autoref{thm:heawood}}
\label{sec:heawood}

For convenience, we include a proof of Joswig's \autoref{thm:heawood}. Readers familiar with~\cite{Joswig_2002} are encouraged to skip to \autoref{sec:subdivisions}.

Let $C$ be a triangulation of $\Sbb^d$ and recall that we assume that we are given $\Sbb^d$ together with a fixed orientation.
Recall that this induces a fixed orientation on every chamber $t$ of $C$.
Moreover, recall that this defines positive and negative vertex orderings of the vertices in the boundary of $t$.
We can use this to define the orientation of a chamber with respect to a given $(d+1)$-colouring~$\psi$ as follows.

\begin{dfn}[Orientation of properly $(d+1)$-coloured chambers]
    Let $C$ be a triangulation of~$\Sbb^d$ with a proper $(d+1)$-colouring $\psi \colon V \to \{0,\dots,d\}$ of its 1-skeleton.
    For a chamber $t$ of $C$, let $v_0, \dots, v_d$ be a positive vertex-ordering.
    Then the $\psi$-orientation of $t$ is \emph{positive} if $i \mapsto \psi(i)$ is an even permutation,
    and otherwise \emph{negative}.
\end{dfn}

Observe that for a proper $(d+1)$-colouring $\psi$, a chamber $t$ has positive $\psi$-orientation if and only if $\psi^{-1}(0), \psi^{-1}(1),\dots,\psi^{-1}(d)$ is a positive vertex-ordering.
Moreover, note that whether the $\psi$-orientation of a chamber $t$ is positive/negative does \emph{not} depend on the chosen vertex-ordering $v_0, \dots, v_d$ of $t$, as long as the vertex-ordering is positive.
In particular, if $v_0, \dots, v_d$ is a negative vertex-ordering, then the parity of the permutation $\left(\begin{smallmatrix}0 & 1 & \cdots & d \\ \psi(v_0) & \psi(v_1) & \cdots & \psi(v_d) \end{smallmatrix}\right)$ is even if and only if the $\psi$-orientation of $t$ is negative.
Analogously, choosing a reverse orientation of $\Sbb^d$ flips all $\psi$-orientations of chambers.

In order to prove (\ref{itm:main-heawood-1}) $\Leftrightarrow$ (\ref{itm:main-heawood-2}) of \autoref{thm:heawood}, we show the following strengthening.

\begin{thm}[Joswig~\cite{Joswig_2002}]
    Let $C$ be a triangulation of $\Sbb^d$.
    Then the following statements are equivalent.
    \begin{enumerate}[(1)]
        \item\label{itm:heawood-1} The 1-skeleton of $C$ is $(d+1)$-colourable.
        \item\label{itm:heawood-2} There exists a proper $(d+1)$-colouring $\psi$ of the 1-skeleton of $C$ such that for every chamber $t$ of $C$, every chamber that shares a $(d-1)$-cell with $t$ in $C$ has the reverse $\psi$-orientation to $t$.
        \item\label{itm:heawood-3} The dual graph of $C$ is bipartite.
        \item\label{itm:heawood-4} All $(d-2)$-cells of $C$ are incident with an even number of $(d-1)$-cells.
    \end{enumerate}
\end{thm}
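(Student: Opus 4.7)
The plan is to prove the cycle of implications (1) $\Rightarrow$ (2) $\Rightarrow$ (3) $\Rightarrow$ (4) $\Rightarrow$ (1), reserving the \nameref{lem:local-global-colouring} for the last step.

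For (1) $\Rightarrow$ (2), fix any proper $(d+1)$-colouring $\psi$ and let $s, t$ be two chambers sharing a $(d-1)$-cell $F$ with vertices $v_1, \ldots, v_d$ and respective opposite vertices $u_s, u_t$. Since $\psi$ is bijective on each chamber, $\psi(u_s) = \psi(u_t)$ is the unique colour missing from $\{\psi(v_1), \ldots, \psi(v_d)\}$. The compatibility of orientations on $\Sbb^d$ forces $s$ and $t$ to induce opposite orientations on $F$, so if $(u_s, v_1, \ldots, v_d)$ is a positive ordering of $s$, then $(v_1, u_t, v_2, \ldots, v_d)$ is a positive ordering of $t$ (checked via the simplicial boundary formula). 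The colour sequences associated to these two orderings differ only by the transposition of their first two entries (using $\psi(u_s) = \psi(u_t)$) and hence have opposite parities, so $s$ and $t$ have opposite $\psi$-orientations.

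The implication (2) $\Rightarrow$ (3) is immediate: partitioning chambers according to $\psi$-orientation gives a bipartition of the dual graph. For (3) $\Rightarrow$ (4), every cycle in a bipartite graph has even length; in particular, for any $(d-2)$-cell $f$, the $f$-cycle has length equal to the number of $(d-1)$-cells incident to $f$, which is therefore even.

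For (4) $\Rightarrow$ (1), I construct a proper canonical local $(d+1)$-colouring in the sense of \autoref{dfn:locally-colourable} and invoke the \nameref{lem:local-global-colouring}. For each directed edge $\vec e = st$ of the dual graph with shared $(d-1)$-cell $F$, let $g_{\vec e}(\pi_s)$ be the unique $\pi_t \in \Sigma_{d+1}$ such that $(s, \pi_s)$ and $(t, \pi_t)$ agree on $F$: the $d$ colours on $F$ are already pinned down by $\pi_s$, forcing the single remaining colour in $\{0, \ldots, d\}$ onto $u_t$. This $g_{\vec e}$ is a bijection with $g_{\cev e} = g_{\vec e}^{-1}$, so condition (\ref{itm:locally-colourable-2}) holds by construction. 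For condition (\ref{itm:locally-colourable-1}), fix a $(d-2)$-cell $f$ and trace the $f$-cycle: throughout, the colours of the $d-1$ vertices of $f$ are invariant, so only the two ``extra'' colours $\alpha, \beta$ can move. At each step of the $f$-cycle, one of the current chamber's two extra vertices is shared with the next chamber (keeping its colour), while the other extra vertex is replaced by a new vertex which is then forced to receive the unique remaining colour. Hence $\alpha$ and $\beta$ simply alternate between the two extra positions at each step, and the cycle returns to the identity permutation precisely when its length -- the number of $(d-1)$-cells incident to $f$ -- is even, which is exactly hypothesis (4).

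The most delicate point is the verification of condition (\ref{itm:locally-colourable-1}) in (4) $\Rightarrow$ (1), namely tracking how the two ``extra'' colours are forced to swap positions around a $(d-2)$-cell, so that the parity condition in (4) translates precisely into balance of every $f$-cycle.
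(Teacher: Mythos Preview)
Your proposal is correct and follows essentially the same approach as the paper: the same cycle of implications, the same $\psi$-orientation argument for (1) $\Rightarrow$ (2), and the same construction of $g_{\vec e}$ together with the alternation-of-two-colours analysis around each $(d-2)$-cell for (4) $\Rightarrow$ (1) via the \nameref{lem:local-global-colouring}. The only cosmetic difference is in (1) $\Rightarrow$ (2): the paper observes directly that $(u_0,\dots,u_{d-1},u_s)$ and $(u_0,\dots,u_{d-1},u_t)$ induce opposite orientations on $s$ and $t$ with identical colour sequences, which is slightly cleaner than your transposed-ordering version, but the content is the same.
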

\begin{proof}
    (\ref{itm:heawood-1}) $\Rightarrow$ (\ref{itm:heawood-2}).
    Let $C$ be a triangulation of $\Sbb^d$ with a proper $(d+1)$-colouring $\psi \colon V \to \{0,1,\dots,d\}$ of its 1-skeleton.
    We show that two chambers $s$ and $t$ that share a $(d-1)$-cell $f$ have opposite $\psi$-orientations.
    Indeed, let $u$ be the vertex of $s$ not in $f$, and let $v$ be the vertex of $t$ not in $f$.
    Then it must hold that $\psi(u) = \psi(v)$.
    Let $u_0, \cdots, u_{d-1}$ be the vertices of $f$.
    Then the vertex-orderings $u_0, \cdots, u_{d-1}, u$ of $s$, and $u_0, \cdots, u_{d-1}, v$ of $t$ induce opposite orientations, but the vertex-colours are ordered in the same way.
    Therefore, $s$ and $t$ have different $\psi$-orientations.

    (\ref{itm:heawood-2}) $\Rightarrow$ (\ref{itm:heawood-3}).
    Let $G$ be the dual graph of $C$.
    Let $V_{+} \subseteq V(G)$ be the set of all chambers with positive $\psi$-orientation and let $V_{-} \subseteq V(G)$ be the set of all chambers with negative $\psi$-orientation.
    Observe that this defines a bipartition $V(G) = V_+ \dot\cup V_-$.
    By (\ref{itm:heawood-2}), no two chambers $s,t \in V(G)$ in the same part of this bipartition are adjacent in $G$.
    
    (\ref{itm:heawood-3}) $\Rightarrow$ (\ref{itm:heawood-4}).
    Let $f$ be a $(d-2)$-cell in $C$ and let $G$ be the dual graph of $C$.
    Let $c = t_0 f_1 t_1 f_2 t_2 \dots f_{\ell-1} t_{\ell-1} f_{\ell} t_0$ be the cyclic ordering of the $(d-1)$-cells $f_1,\dots,f_{\ell}$ and chambers $t_0,\dots,\allowbreak t_{\ell-1}$ around $f$, which describes a cycle in $G$ (see \autoref{subsec:topology}).
    By (\ref{itm:heawood-3}), $G$ is bipartite and hence $\ell$ is even, showing that $f$ is incident with an even number of $(d-1)$-cells in $C$.
    
    (\ref{itm:heawood-4}) $\Rightarrow$ (\ref{itm:heawood-1}).
    Let $C$ be a triangulation of $\Sbb^d$ and assume that all its $(d-2)$-cells are incident with an even number of $(d-1)$-cells.
    Let $X = \Sigma_{d+1}$ be the symmetric group whose elements are the permutations on $\{0,1,2,\dots,d\}$.
    Recall that for a chamber $t$ with vertices $u_0 < u_1 < \dots < u_d$, each element $\pi_t \in X$ corresponds to a colouring of the vertices of $t$ with $d+1$ colours where vertex~$u_i$ gets colour $\pi_t(i)$.
    
    We will use the \nameref{lem:local-global-colouring} (\autoref{lem:local-global-colouring}) to show the implication (\ref{itm:heawood-4}) $\Rightarrow$ (\ref{itm:heawood-1}).
    For this, we construct a proper canonical local $(d+1)$-colouring of $C$, as defined in \autoref{dfn:locally-colourable}.
    First, we need to construct the bijective maps $g_{\vec e} \colon X \to X$ on the directed edges $\vec e$ of the dual graph $G$.
    Then it suffices to show that these bijective maps satisfy \autoref{dfn:locally-colourable} (\ref{itm:locally-colourable-1}) and (\ref{itm:locally-colourable-2}).
    By \autoref{lem:local-global-colouring}, we then get a proper $(d+1)$-colouring of the 1-skeleton of $C$.
    
    Let $G$ be the dual graph of~$C$.
    We assign bijective maps $g_{\vec e} \colon X \to X$ to the oriented edges $\vec e$ of~$G$, which makes it into a dual $X$-gain graph of $C$, as follows.
    Let $f$ be the $(d-1)$-cell in $C$ corresponding to the edge $\vec e = st$ of $G$.
    For each colouring $(s,\pi_s)$ of $s$ (with $\pi_s \in X$) there exists a unique colouring $(t,\pi_t)$ of $t$ (with $\pi_t \in X$) that agrees with $(s,\pi_s)$ on the vertices of $f$, and vice versa.
    We define $g_{\vec e}$ to be the bijection that maps each colouring $\pi_s \in X$ to the according $\pi_t$.
    Observe that $g_{\cev e} = g_{\vec e}^{-1}$ for all directed edges $\vec e \in E(G)$.

    We prove that \autoref{dfn:locally-colourable} (\ref{itm:locally-colourable-1}) is fulfilled in the following claim.
    
    \begin{claim}
        \label{claim:heawood-face-cycles}
        For each $(d-2)$-cell $f$ in $C$, the $f$-cycle $c$ in $G$ is balanced.
    \end{claim}
    \begin{claimproof}
        Let $c = s_0 \vec e_1 s_1 \vec e_2 s_2 \dots s_{k-1} \vec e_k s_0$ be the $f$-cycle in $G$ and $f_i$ be the $(d-1)$-cell in $C$ corresponding to the edge $\vec e_i$ in $G$, for $i=1,\dots,k$.
        We have to show that $g_c = g_{\vec e_k} \circ \dots \circ g_{\vec e_2} \circ g_{\vec e_1} = \unit$.
        
        By definition of $g_{\vec e_i}$, the colouring $(s_{i-1},\pi_{s_{i-1}})$ and the colouring $(s_i, \pi_{s_i} = g_{\vec e_i}(\pi_{s_{i-1}}))$ in $C$ agree on the vertices of $f_i$ and therefore on the vertices of $f$.
        Let $(s_0,\pi_{s_0})$ be an arbitrary colouring of~$s_0$.
        Define $\pi_{s_i} := (g_{\vec e_i} \circ \dots \circ g_{\vec e_1})(\pi_{s_0})$, which gives colourings $(s_i, \pi_{s_i})$ of $s_i$.
        Then all colourings $(s_i, \pi_{s_i})$ agree on the vertices of the $(d-2)$-cell $f$.
        Without loss of generality, $f$ is coloured with the colours $\{2,3,\dots,d\}$.
        For $i=1,\dots,k$, let $w_i$ be the vertex of $f_i$ that is not a vertex of $f$.
        Then the $w_i$ are coloured alternatingly with the colours $0$ and $1$.
        By~(\ref{itm:heawood-4}), every $f$-cycle has even length (i.e., $k$ is even) and it follows that $g_c(\pi_{s_0}) = \pi_{s_0}$ for every colouring $\pi_{s_0}$ of the vertices of~$s_0$.
        Therefore, $g_c = \unit$ for every $f$-cycle $c$ in $G$.
    \end{claimproof}
    
    Note that \autoref{dfn:locally-colourable} (\ref{itm:locally-colourable-1}) is fulfilled by \autoref{claim:heawood-face-cycles}, and \autoref{dfn:locally-colourable} (\ref{itm:locally-colourable-2}) is fulfilled by the definition of the functions $g_{\vec e}$.
    We use \autoref{lem:local-global-colouring} with $k=d+1$ to obtain a $(d+1)$-colouring of the $1$-skeleton of $C$.
\end{proof}

\section{Proof of (\ref{itm:main-subdivisions-1}) \texorpdfstring{$\Leftrightarrow$}{<=>} (\ref{itm:main-subdivisions-2}) of \autoref{thm:subdivisions}}
\label{sec:subdivisions}

Let $C$ be a triangulation of $\Sbb^d$ and $t$ be a chamber of~$C$.
The $d$-dimensional simplicial complex~$C'$ that is obtained from~$C$ by \emph{subdividing} $t$ is defined as follows.
We add a vertex (i.e.\ a $0$-cell)~$v$ in the chamber $t$, and for each $i=1,2,\dots,d-1$ and for each $(i-1)$-cell~$f$ of~$C$, we add an $i$-cell~$f'$ consisting of~$v$ and the vertices of~$f$.
Observe that $C'$ is again a triangulation of $\Sbb^d$.
For triangulations $C$ and $C'$ of $\Sbb^d$, we say that $C'$ is a \emph{subdivision} of $C$ if there exists a subset $T$ of chambers of $C$ such that $C'$ is obtained from $C$ by subdividing every chamber of $T$.
We say that a triangulation $C$ of $\Sbb^d$ is \emph{subdividable} if there exists a subdivision $C'$ of $C$ such that for each $(d-2)$-cell $f$ in $C'$, the number of incident $(d-1)$-cells of $f$ in $C'$ is divisible by three.

\begin{dfn}[Orientation of properly $(d+2)$-coloured chambers]
    Let $C$ be a triangulation of~$\Sbb^d$ with a proper $(d+2)$-colouring $\psi \colon V \to \{0,\dots,d+1\}$ of its 1-skeleton.
    For a chamber~$t$ of~$C$, let $v_0,\cdots,v_d$ be a positive vertex-ordering.
    Then the $\psi$-orientation of $t$ is \emph{positive} if 
    \[
        \begin{pmatrix}0 & 1 & \cdots & d & d+1\\ \psi(v_0) & \psi(v_1) & \cdots & \psi(v_d) & c\end{pmatrix} \quad \text{is an even permutation,}
    \]
    where $c$ is the colour not used by $v_0,\dots,v_d$, i.e.\ $\{c\} = \{0,\dots,d+1\} \setminus \psi(\{v_0,\dots, v_d\})$, and otherwise \emph{negative}.
\end{dfn}

In order to prove (\ref{itm:main-subdivisions-1}) $\Leftrightarrow$ (\ref{itm:main-subdivisions-2}) of \autoref{thm:subdivisions}, we show the following strengthening.
\begin{thm}
    Let $C$ be a triangulation of $\Sbb^d$.
    Then the following statements are equivalent.
    \begin{enumerate}[(1)]
        \item\label{itm:subdivisions-1} The 1-skeleton of $C$ is $(d+2)$-colourable.
        \item\label{itm:subdivisions-2} There exists a subdivision $C'$ of $C$ and a proper $(d+2)$-colouring $\psi'$ of the 1-skeleton of $C'$ such that all chambers of $C'$ have the same $\psi'$-orientation.
        \item\label{itm:subdivisions-3} There exists a subdivision of $C$ such that for all $(d-2)$-cells, the number of incident $(d-1)$-cells is divisible by three.
    \end{enumerate}
\end{thm}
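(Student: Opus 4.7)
The plan is to prove (\ref{itm:subdivisions-1}) $\Rightarrow$ (\ref{itm:subdivisions-2}) $\Rightarrow$ (\ref{itm:subdivisions-3}) $\Rightarrow$ (\ref{itm:subdivisions-1}), mirroring the structure of the proof of \autoref{thm:heawood}. For (\ref{itm:subdivisions-1}) $\Rightarrow$ (\ref{itm:subdivisions-2}), I would start with a proper $(d+2)$-colouring $\psi$ of $C$'s $1$-skeleton, subdivide exactly those chambers that have negative $\psi$-orientation, and extend $\psi$ to $\psi'$ on the resulting $C'$ by giving each new central vertex $v_\ast$ the unique colour missing from its parent chamber's colouring. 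The main claim is that each of the $d+1$ sub-chambers of a subdivided chamber $t$ has $\psi'$-orientation opposite to $t$'s. Placing $v_\ast$ last in the vertex ordering, the sub-chamber $t_i$ omitting $t$'s $i$-th vertex acquires a vertex-ordering sign of $(-1)^{d-i}$ (its natural ordering reaches a positive one by shifting $v_\ast$ from position $i$ to position $d$) and a colour-permutation sign of $(-1)^{d+1-i}$ (the permutation encoding its colouring differs from $t$'s by a cyclic shift of the last $d+2-i$ positions). The product is $-1$ independent of $i$, so every sub-chamber flips orientation; since the untouched chambers remain positive, all chambers of $C'$ are positively $\psi'$-oriented.

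For (\ref{itm:subdivisions-2}) $\Rightarrow$ (\ref{itm:subdivisions-3}), I would fix a $(d-2)$-cell $f$ of $C'$ and consider the cyclic sequence $t_0, \ldots, t_{\ell-1}$ of chambers around $f$, writing $t_i = f \cup \{w_{i-1}, w_i\}$. Each $w_i$, being adjacent to every vertex of $f$, receives one of the three colours missing from $f$, and $\psi'(w_{i-1}) \neq \psi'(w_i)$ because both lie in $t_i$. If moreover $\psi'(w_{i-1}) = \psi'(w_{i+1})$ for some $i$, then $t_i$ and $t_{i+1}$ share $f \cup \{w_i\}$ and use the same $d+1$ colours, so the same parity argument as for (\ref{itm:heawood-1}) $\Rightarrow$ (\ref{itm:heawood-2}) of \autoref{thm:heawood} forces them to have opposite $\psi'$-orientations, contradicting (\ref{itm:subdivisions-2}). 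Hence every three consecutive extras are rainbow on the three available colours, which forces the cyclic sequence $(\psi'(w_i))_i$ to have period exactly $3$; in particular $3 \mid \ell$.

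For (\ref{itm:subdivisions-3}) $\Rightarrow$ (\ref{itm:subdivisions-1}), I would apply the \nameref{lem:local-global-colouring} to the subdivision $C'$ supplied by (\ref{itm:subdivisions-3}) — a $(d+2)$-colouring of $C'$'s $1$-skeleton restricts to one of $C$'s since $V(C) \subseteq V(C')$. I would equip the dual graph of $C'$ with the \emph{swap} gain map: for each directed dual edge $\vec e = st$ sharing the $(d-1)$-cell $h$, let $g_{\vec e}(\pi_s)$ be the unique $\pi_t \in \Sigma_{d+2}$ that agrees with $\pi_s$ on $h$ and assigns to $t$'s extra vertex the colour that was missing from $\pi_s$ (so the missing colour of $\pi_t$ is the colour $\pi_s$ gave to $s$'s extra vertex). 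This is a bijection with $g_{\cev e} = g_{\vec e}^{-1}$, and \autoref{dfn:locally-colourable}~(\ref{itm:locally-colourable-2}) holds by construction. Tracing the swap rule around an $f$-cycle of length $\ell$, the triple (colour of the current extra, colour of the new extra, missing colour) at successive chambers cycles through the three cyclic rotations of the triple of colours missing from $f$, so the gain is the identity iff $3 \mid \ell$; condition (\ref{itm:subdivisions-3}) supplies this, and the Lemma delivers the required colouring. The main obstacle will be the sign computation in (\ref{itm:subdivisions-1}) $\Rightarrow$ (\ref{itm:subdivisions-2}); the other two implications become clean once the swap gain map and the ``three consecutive extras are rainbow'' observation are in hand.
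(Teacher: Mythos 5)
Your proposal is correct and follows essentially the same route as the paper: subdivide the negatively oriented chambers and extend the colouring for (\ref{itm:subdivisions-1})\,$\Rightarrow$\,(\ref{itm:subdivisions-2}), track the cyclic sequence of colours of the ``extra'' vertices around a $(d-2)$-cell for (\ref{itm:subdivisions-2})\,$\Rightarrow$\,(\ref{itm:subdivisions-3}), and feed the swap gain map into the \nameref{lem:local-global-colouring} for (\ref{itm:subdivisions-3})\,$\Rightarrow$\,(\ref{itm:subdivisions-1}). The only surface differences are cosmetic: your $(-1)^{d-i}\cdot(-1)^{d+1-i}=-1$ sign bookkeeping replaces the paper's one-transposition comparison, and your explicit ``no two of three consecutive extras repeat a colour, else two adjacent chambers would have opposite orientations'' argument makes precise the step the paper only sketches.
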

\begin{proof}
    (\ref{itm:subdivisions-1}) $\Rightarrow$ (\ref{itm:subdivisions-2}).
    Let $C$ be a triangulation of $\Sbb^d$ with a proper $(d+2)$-colouring $\psi$.
    Let $G$ be the dual graph of $C$ and $V_+,V_- \subseteq V(G)$ be the set of all chambers of $C$ with positive (respectively negative) $\psi$-orientation.
    Observe that $V_+$ and $V_-$ partition the set of all chambers, i.e.\ $V(G) = V_+ \,\dot\cup\, V_-$.
    We subdivide all chambers in $V_-$ to obtain the triangulation $C' \supseteq C$ of $\Sbb^d$.
    Let $\psi'$ be the proper $(d+2)$-colouring of $C'$ with $\psi'(v)=\psi(v)$ for all vertices $v \in V(C)$, i.e.\ the (unique) natural extension of the colouring $\psi$ to the triangulation $C'$.
    It remains to show that all chambers have positive $\psi'$-orientation.

    If $t$ is a chamber of $C$ that has positive $\psi$-orientation, then $t$ is still a chamber of $C'$ with a positive $\psi'$-orientation.
    So let $t$ be a chamber of $C$ with negative $\psi$-orientation.
    Let $u$ be the vertex that is added in the inside of $t$ when subdividing~$t$.
    Let $t'$ be a chamber of $C'$ contained in~$t$.
    Then $t'$ consists of the vertices of a $(d-1)$-cell $f$ (say, with vertices $v_0 < v_1 < \dots < v_{d-1}$) of $t$ and the vertex $u$.
    Let $v$ be the vertex of the chamber $t$ that is not a vertex of the chamber $t'$.
    First of all, observe that the vertex-ordering $v_0 < v_1 < \cdots < v_{d-1} < v$ induces the same orientation on the chamber $t$ as the vertex-ordering $v_0 < v_1 < \cdots < v_{d-1} < u$ does on the chamber $t'$.
    Moreover, note that $\psi'(u) \neq \psi'(v)$ since the vertices $u$ and $v$ are adjacent in the 1-skeleton of $C'$.
    Therefore, the permutations 
    \[
        \left(
        \begin{smallmatrix} 
            0 & \cdots & d-1 & d & d+1 \\
            \psi'(v_0) & \cdots & \psi'(v_{d-1}) & \psi'(v) & \psi'(u)
        \end{smallmatrix}\right) \text{ and }
        \left(
        \begin{smallmatrix}
            0 & \cdots & d-1 & d & d+1 \\
            \psi'(v_0) & \cdots & \psi'(v_{d-1}) & \psi'(u) & \psi'(v)
        \end{smallmatrix}\right)
    \]
    have opposite parities.
    Since the $\psi$-orientation of the chamber $t$ is negative, the $\psi'$-orientation of the chamber $t'$ is positive.
    This shows that all chambers of $C'$ have positive $\psi'$-orientation.
    
    (\ref{itm:subdivisions-2}) $\Rightarrow$ (\ref{itm:subdivisions-3}).
    Let $C'$ be a subdivision of $C$ with a proper $(d+2)$-colouring $\psi'$ of the 1-skeleton of $C'$ such that all chambers have the same $\psi'$-orientation (say, positive $\psi'$-orientation).
    We show that for every $(d-2)$-cell $f$ in $C'$, the number of incident $(d-1)$-cells is divisible by three.

    Let $c = t_0 f_1 t_1 f_2 t_2 \dots t_{k-1} f_k t_0$ be the cyclic ordering of the $(d-1)$-cells $f_1,\dots,f_k$ and chambers $t_0,t_1,\dots,t_{k-1}$ around $f$ induced by the embedding of $C'$ in $\mathbb{S}^d$.
    Let $w_i$ be the vertex of $f_i$ that is not in $f$, for $i=1,\dots,k$.
    Let $c_1,c_2,c_3$ be the three colours that are not colours of $f'$.
    Without loss of generality, $\psi'(w_1) = c_1$ and $\psi'(w_2) = c_2$.
    Since all chambers $t_0,t_1,\dots,t_{k-1}$ have the same $\psi'$-orientation, it must hold that $\psi'(w_3) = c_3$, $\psi'(w_4) = c_1$, $\dots$, $\psi'(w_k) = c_3$, $\psi'(w_1) = c_1$.
    Then it follows that $k$ is divisible by three and hence the number of $(d-1)$-cells that are incident with~$f'$ is divisible by three.
    
    (\ref{itm:subdivisions-3}) $\Rightarrow$ (\ref{itm:subdivisions-1}).
    Let $C'$ be a subdivision of $C$ such that for every $(d-2)$-cell, the number of incident $(d-1)$-cells is divisible by three.
    It suffices to show that there exists a $(d+2)$-colouring of the $1$-skeleton of $C'$.
    We fix an arbitrary ordering $v_0 < v_1 < \dots < v_n$ of the vertices $V = \{v_0,v_1,\dots,v_n\}$ of $C'$.
    Let $X = \Sigma_{d+2}$ be the symmetric group symmetric group whose elements are the permutations on $\{0,1,2,\dots,d+1\}$.
    For a chamber $t$ with vertices $u_0 < u_1 < \dots < u_d$ and an element $\pi_t \in X$, the colouring $(t,\pi_t)$ of $t$ induces a colouring of the vertices of $t$, i.e.\ vertex $u_i$ gets colour $\pi_t(i)$ (and no vertex of $t$ gets colour $\pi_t(d+1)$).

    We will use the \nameref{lem:local-global-colouring} (\autoref{lem:local-global-colouring}) to show the implication (\ref{itm:subdivisions-3}) $\Rightarrow$ (\ref{itm:subdivisions-1}).
    For this, we construct a proper canonical local $(d+1)$-colouring of $C$, as defined in \autoref{dfn:locally-colourable}.
    First, we need to construct the bijective maps $g_{\vec e} \colon X \to X$ on the directed edges $\vec e$ of the dual graph $G$.
    Then it suffices to show that these bijective maps satisfy \autoref{dfn:locally-colourable} (\ref{itm:locally-colourable-1}) and (\ref{itm:locally-colourable-2}).
    By \autoref{lem:local-global-colouring}, we then get a proper $(d+1)$-colouring of the 1-skeleton of $C$.
    
    Let $f$ be the $(d-1)$-cell in $C'$ corresponding to the edge $\vec e = st \in E(G)$ in the dual graph~$G'$ of~$C'$.
    For each colouring $(s,\pi_s)$ of $s$ (with $\pi_s \in X$) there exists a unique colouring $(t,\pi_t)$ of $t$ that agrees with $(s,\pi_s)$ on the vertices of $f$ and satisfies $\pi_s(d+1) \neq \pi_t(d+1)$; and vice versa.
    We define $g_{\vec e}$ to be the bijective map that maps each colouring $\pi_s \in X$ to the according $\pi_t$.
    Observe that $g_{\cev e} = g_{\vec e}^{-1}$ for all directed edges $\vec e \in E(G)$.
    
    \begin{claim}
    \label{claim:5-colouring-face-cycles}
        For each $(d-2)$-cell $f$ in $C'$, the $f$-cycle $c$ in $G'$ is balanced.
    \end{claim}
    \begin{claimproof}
        Let $c = s_0 \vec e_1 s_1 \vec e_2 s_2 \dots s_{k-1} \vec e_k s_0$ be the $f$-cycle in $G$ and $f_i$ be the $(d-1)$-cell in $C$ corresponding to the edge $\vec e_i$ in $G$, for $i=1,\dots,k$.
        We have to show that $g_c = g_{\vec e_k} \circ \dots \circ g_{\vec e_2} \circ g_{\vec e_1} = \unit$.
        
        By definition of $g_{\vec e_i}$, the colouring $(s_{i-1},\pi_{s_{i-1}})$ of $s_{i-1}$ and the colouring $(s_i, \pi_{s_i} = g_{\vec e_i}(\pi_{s_{i-1}}))$ of~$s_i$ agree on the vertices of $f_i$ and therefore on the vertices of~$f$.
        Let $(s_0,\pi_{s_0})$ be an arbitrary colouring of $s_0$.
        Define $\pi_{s_i} := (g_{\vec e_i} \circ \dots \circ g_{\vec e_1})(\pi_{s_0})$, which gives a colouring $(s_i,\pi_{s_i})$ of $s_i$.
        Then all colourings $(s_i,\pi_{s_i})$ agree on the vertices of the $(d-2)$-cell $f$.
        Without loss of generality, $f$ is coloured with the colours $\{3,4,\dots,d+1\}$.
        For $i=1,\dots,k$, let $w_i$ be the vertex of $f_i$ that is not a vertex of $f$.
        Then the $w_i$ are coloured $0,1,2$ (in that cyclic order) or $0,2,1$ (in that cyclic order).
        Since every $f$-cycle has length divisible by three it follows that $g_c(\pi_{s_0}) = \pi_{s_0}$ for every $\pi_{s_0} \in X$.
        Therefore, $g_c = \unit$ for every $f$-cycle $c$ in $G$.
    \end{claimproof}
    
    Note that  \autoref{dfn:locally-colourable} (\ref{itm:locally-colourable-1}) is fulfilled by \autoref{claim:5-colouring-face-cycles}, and  \autoref{dfn:locally-colourable} (\ref{itm:locally-colourable-2}) is fulfilled by the definition of the functions $g_{\vec e}$.
    We use \autoref{lem:local-global-colouring} with $k=d+2$ to obtain a $(d+2)$-colouring of the $1$-skeleton of $C$.
\end{proof}

\subsection{Nonsubdividable triangulations}

By the four-colour theorem and by \autoref{thm:subdivisions}, every plane triangulation is subdividable, i.e.\ has a subdivision such that every vertex has degree divisible by three.
For $d = 3$ there exist triangulations of $\Sbb^3$ that are not subdividable, for example the triangulation of $\Sbb^3$ from~\cite{ueckerdtpersonalcommunication} whose 1-skeleton is the complete graph $K_6$, see \autoref{thm:Kn-in-3D}.
In the \autoref{thm:counterexample} we show that these triangulations of $\Sbb^3$ can be used to obtain triangulations of $\Sbb^d$ for $d > 4$ whose 1-skeleton contains a complete graph.

\begin{prop}[\cite{ueckerdtpersonalcommunication}]
\label{thm:Kn-in-3D}
    For every $k \geq 4$ there exists a triangulation of $\Sbb^3$ whose 1-skeleton is the complete graph $K_k$.
\end{prop}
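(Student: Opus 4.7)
The plan is to exhibit explicit triangulations via $4$-dimensional cyclic polytopes. Fix $k \geq 5$ and distinct real numbers $t_1 < t_2 < \cdots < t_k$. Consider the points $p_i := (t_i, t_i^2, t_i^3, t_i^4)$ on the moment curve in $\mathbb{R}^4$, and let $P := \operatorname{conv}(p_1, \ldots, p_k)$ be their convex hull. A Vandermonde argument shows that no five such points are affinely dependent, so every facet of $P$ is a $3$-simplex; hence $\partial P$ is a simplicial $3$-sphere, which yields a triangulation of $\Sbb^3$ in the sense of \autoref{sec:definitions}, whose chambers are (identified with) the facets of $P$.

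The heart of the argument is the $2$-neighborliness of $P$: every pair $\{p_i, p_j\}$ forms an edge of $\partial P$. Given $i \neq j$, consider the polynomial $f(t) := (t - t_i)^2 (t - t_j)^2 = a_0 + a_1 t + a_2 t^2 + a_3 t^3 + a_4 t^4$. Then the affine hyperplane
\[
H := \{x \in \mathbb{R}^4 : a_1 x_1 + a_2 x_2 + a_3 x_3 + a_4 x_4 + a_0 = 0\}
\]
satisfies $\langle (a_1, a_2, a_3, a_4), p_\ell \rangle + a_0 = f(t_\ell)$, which vanishes for $\ell \in \{i, j\}$ and is strictly positive otherwise. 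Hence $H$ is a supporting hyperplane of $P$ that contains $p_i$ and $p_j$ and strictly separates them from the remaining vertices, so the segment $[p_i, p_j]$ lies on $\partial P$ and is an edge of the triangulation. Therefore the $1$-skeleton is $K_k$.

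For the edge case $k = 4$: any triangulation of $\Sbb^3$ has at least $5$ vertices (the minimum being $\partial \Delta^4$), so the statement has to be read as starting from $k = 5$ onward, or with ``is'' weakened to ``contains,'' in which case the $k=5$ construction suffices. The main technical step is the $2$-neighborliness argument above, which is classical (a special case of Gale's evenness condition) but must be verified by hand; everything else is a routine translation between the polytopal-boundary and simplicial-complex viewpoints.
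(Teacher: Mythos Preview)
Your cyclic-polytope argument is correct and is the standard construction for $k \geq 5$: the boundary of $C(k,4)$ is a simplicial $3$-sphere whose $1$-skeleton is $K_k$ by $2$-neighborliness, exactly as you verify. The paper itself gives no proof of this proposition (it is cited from a personal communication), so there is nothing to compare your approach against.

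Your worry about $k=4$, however, stems from reading ``triangulation of $\Sbb^3$'' as a simplicial complex homeomorphic to $\Sbb^3$. That is not the paper's definition. Re-read \autoref{sec:definitions}: a triangulation of $\Sbb^3$ is a simplicial \emph{$2$-complex} embedded in $\Sbb^3$ whose complementary regions (chambers) are $3$-simplices. Under this definition the boundary $2$-complex of a single tetrahedron, embedded in $\Sbb^3$, is a perfectly good triangulation: its complement has two chambers (the inside and the outside), each bounded by the same four triangles and hence each a $3$-simplex. Its $1$-skeleton is $K_4$. The paper uses precisely this object later --- in \autoref{sec:edge-colourings} the $K_5$ example is described as ``obtained from the tetrahedron by subdividing one chamber'' --- and the remark that the dual graph ``may have parallel edges'' is there exactly to accommodate such examples. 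So the statement holds for $k=4$ as written; no reinterpretation is needed.
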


\begin{prop}
\label{thm:counterexample}
    Let $d \geq 3$ and $k \geq 1$.
    There exists a triangulation of $\Sbb^d$ whose $1$-skeleton contains the complete graph $K_{d+k}$.
\end{prop}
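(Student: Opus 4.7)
The plan is to prove this by induction on $d$, with the base case $d=3$ supplied directly by \autoref{thm:Kn-in-3D} (since $d+k = 3+k \geq 4$ whenever $k \geq 1$), and the inductive step handled by a simplicial suspension.

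For the inductive step, given a triangulation $C$ of $\Sbb^{d-1}$ whose $1$-skeleton contains $K_{(d-1)+k}$, I would build the \emph{suspension} $\Sigma C$ as follows. Write $\Sbb^d$ as the topological suspension of $\Sbb^{d-1}$, i.e.\ $\Sbb^d = (\Sbb^{d-1} \times [-1,1]) / {\sim}$ where $\sim$ collapses $\Sbb^{d-1} \times \{1\}$ to a point $n$ and $\Sbb^{d-1} \times \{-1\}$ to a point $s$. Embedding $C$ as the equator $\Sbb^{d-1} \times \{0\}$, declare the chambers of $\Sigma C$ to be the $d$-simplices $\sigma \cup \{n\}$ and $\sigma \cup \{s\}$ for each chamber $\sigma$ of $C$, and take all proper subsets as the lower-dimensional simplices. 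Because each closed hemisphere is the topological cone on $\Sbb^{d-1}$, coning the simplicial complex $C$ from $n$ (respectively from $s$) triangulates it, and gluing the two triangulated hemispheres along $C$ yields a triangulation of $\Sbb^d$ whose chambers are exactly the $d$-simplices listed above.

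It then suffices to observe that the $1$-skeleton of $\Sigma C$ contains the $1$-skeleton of $C$, together with the edges $nu$ for every vertex $u$ of $C$. Hence if $U \subseteq V(C)$ induces $K_{(d-1)+k}$ in the $1$-skeleton of $C$, then $U \cup \{n\}$ induces $K_{d+k}$ in the $1$-skeleton of $\Sigma C$, as required.

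The only step needing genuine care is the verification that the combinatorial suspension actually realises as a simplicial complex embedded in $\Sbb^d$ whose chambers are precisely the prescribed $d$-simplices; this is standard but I would spell it out by invoking the identification of $\Sbb^d$ with the join $\Sbb^{d-1} * \Sbb^0$ and the fact that the simplicial join of a triangulation of $\Sbb^{d-1}$ with the two-point complex $\Sbb^0$ triangulates this join.
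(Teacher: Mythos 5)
Your proof is correct and takes essentially the same approach as the paper: both induct on $d$ with the $d=3$ base case from \autoref{thm:Kn-in-3D}, and both perform the same inductive step of suspending (the paper calls it the ``double cone'') a triangulation of $\Sbb^{d-1}$ to obtain a triangulation of $\Sbb^d$, then observing that the apex together with the vertices of the old $K_{(d-1)+k}$ span a $K_{d+k}$.
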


\begin{proof}
    We show \autoref{thm:counterexample} by induction on $d$.
    For $d=3$ we use \autoref{thm:Kn-in-3D} to obtain a triangulation of $\Sbb^3$ whose 1-skeleton is the complete graph $K_{3+k}$.
    
    Assume that \autoref{thm:counterexample} holds for some $d \geq 3$.
    Let $C'$ be a triangulation of $\Sbb^d$ whose $1$-skeleton contains the complete graph $K_{d+k}$ as a subgraph.
    Consider an embedding of $C'$ in $\Sbb^{d+1}$ and observe that this cell complex has two chambers (i.e.\ the `inside' and `outside').
    We add the vertex $x$ in the inside and the vertex $y$ in the outside of $C'$.
    Then we add for every $d'$-cell~$f'$ of~$C'$ two $(d'+1)$-cells consisting of the vertices of $f'$ and $x$ (respectively $y$).
    This construction is also called `double cone', which is a triangulation of $\Sbb^{d+1}$.
    Since the $1$-skeleton of $C'$ contains the complete graph $K_{d+k}$ as a subgraph, the $1$-skeleton of $C$ contains the complete graph $K_{(d+1)+k}$ on the vertices of $K_{d+k}$ and $x$ as a subgraph.
\end{proof}

The following corollary follows from \autoref{thm:counterexample} and \autoref{thm:subdivisions}.
\begin{cor}
    For every $d \geq 3$ there exists a triangulation of $\Sbb^d$ that is not subdividable.\qed
\end{cor}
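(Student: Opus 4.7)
The plan is to combine \autoref{thm:counterexample} with the equivalence \mbox{(\ref{itm:main-subdivisions-1}) $\Leftrightarrow$ (\ref{itm:main-subdivisions-2})} of \autoref{thm:subdivisions}, using the trivial chromatic lower bound furnished by a large complete subgraph.

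Concretely, fix $d \geq 3$. Apply \autoref{thm:counterexample} with $k = 3$ (any $k \geq 3$ works) to obtain a triangulation $C$ of $\Sbb^d$ whose $1$-skeleton contains $K_{d+3}$ as a subgraph. Since $\chi(K_{d+3}) = d+3$, any proper vertex colouring of the $1$-skeleton of $C$ needs at least $d+3$ colours. In particular, the $1$-skeleton of $C$ is not $(d+2)$-colourable, so \autoref{thm:subdivisions} (\ref{itm:main-subdivisions-1}) fails for $C$. By the equivalence with \autoref{thm:subdivisions} (\ref{itm:main-subdivisions-2}), there is no subdivision $C'$ of $C$ in which every $(d-2)$-cell is incident with a number of $(d-1)$-cells divisible by three. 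Hence $C$ is not subdividable, proving the corollary.

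There is essentially no obstacle: the entire argument is a two-line deduction from the two previously established results. The only subtle point is choosing $k$ large enough — taking $k = 2$ would only yield a $K_{d+2}$ subgraph, which does not by itself rule out a $(d+2)$-colouring, so one must take $k \geq 3$ to get the chromatic lower bound strictly above $d+2$.
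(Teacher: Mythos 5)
Your proposal is correct and is precisely the deduction the paper intends: the corollary is stated with an implicit ``follows from \autoref{thm:counterexample} and \autoref{thm:subdivisions}'' and your argument fills in exactly those steps. Your observation about needing $k \geq 3$ (so that $K_{d+k}$ forces strictly more than $d+2$ colours) is a correct and worthwhile clarification of a detail the paper leaves to the reader.
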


\subsection{Complexity}
The problem of deciding whether the 1-skeleton of a given triangulation of~$\Sbb^d$ is $(d+1)$-colourable is in the complexity class $P$. In fact, by the result of Joswig~\cite{Joswig_2002} (see \autoref{thm:heawood}), an algorithm only needs to check whether every $(d-2)$-cell is incident with an even number of $(d-1)$-cells.

Let us consider the problem of deciding $(d+2)$-colourability of the 1-skeleton of a given triangulation of $\Sbb^d$.
For the case $d=2$ and by the work on the four-colour theorem, a quadratic time algorithm for four-colouring planar graphs has been developed~\cite{Robertson_1996}. For every $d \geq 3$ however, we show that the problem of deciding whether the 1-skeleton of a given triangulation of $\Sbb^d$ is $(d+2)$-colourable is $NP$-complete.
As an intermediate step, we prove the following lemma.

\begin{lem}
\label{lem:complexity-5-colouring}
    It is $NP$-complete to decide whether the 1-skeleton of a given triangulation of $\Sbb^3$ (without parallel edges) is $5$-colourable.
\end{lem}
\begin{proof}
    By \cite{Garey_1976}, it is $NP$-complete to decide whether a given planar 2-connected graph is 3-colourable. We reduce this problem to the problem of deciding whether the 1-skeleton of a given triangulation of $\Sbb^3$ is $5$-colourable.

    Let $G=(V,E)$ be a planar 2-connected graph together with an embedding of $G$ in $\Sbb^2$ (which can be determined in linear time~\cite{Hopcroft_1974}). Let $F$ be the set of faces determined by the embedding of~$G$. Since $G$ is 2-connected, every face in $F$ is bounded by a cycle.

    From $G$ we first build a simplicial 2-complex $C$ as follows.
    First, we add two vertices~$v_1$ and~$v_2$. Then we add edges $uv_i$ for all $u \in V(G)$ and $i \in \{1,2\}$. Moreover, we add for every edge $uu' \in E(G)$ two 2-cells $uu'v_1$ and $uu'v_2$. Let $C$ be the resulting 2-complex.
    Then $C$ can be embedded in $\Sbb^3$ as follows.
    Let $\iota \colon G \hookrightarrow \Sbb^2$ be an embedding of $G$ into $\Sbb^2$, and let $\iota' \colon \Sbb^2 \hookrightarrow \Sbb^3$ be an obvious embedding of $\Sbb^2$ into $\Sbb^3$. Then $\Sbb^3 \setminus \iota'(\Sbb^2)$ determines two path-connected components, which we will refer to as the `inside' and the `outside' of the embedding of $\Sbb^2$ into $\Sbb^3$. First, we embed $G$ in $\Sbb^3$ using $\iota' \circ \iota$. Then we embed $v_1$ and every edge and every 2-cell incident with $v_1$ into the inside, and we embed $v_2$ and every edge and every 2-cell incident with $v_2$ into the outside of the embedding of $\Sbb^2$ into $\Sbb^3$.

    For a face $f \in F$, let $s_f$ be the unique chamber of $C$ with $\iota'(f) \subseteq s_f$.
    Note that the map that maps each $f \in F$ to the chamber $s_f$ of~$C$ is a bijection between the set $F$ and the set of chambers of $C$. 
    Moreover, observe that for every $f \in F$, the chamber $s_f$ is bounded by the vertices $v_1,v_2$ and the vertices on the boundary of $f$ in~$G$.

    For each face $f \in F$, whose boundary consists of the vertices $u_1 u_2 \ldots u_k$ in that cyclic order say, we do all of the following. First, we add two vertices $x_f$ and $y_f$. Moreover, we add all edges $u_i x_f$ and $u_i y_f$ for $i \in [k]$, and the edges $v_1x_f, x_fy_f$ and $y_fv_2$. Now, we add the 2-cells on the vertices $u_iu_{i+1}x_f, u_iu_{i+1}y_f, v_1x_fu_i, x_fy_fu_i$ and $y_fv_2u_i$ for every $i \in [k]$. This yields a 2-complex~$C'$. Note that the embedding of $C$ in $\Sbb^3$ can be extended to an embedding of $C'$ in $\Sbb^3$ by placing each $x_f$ and $y_f$ and all edges and 2-cells incident with $x_f$ or $y_f$ into the chamber~$s_f$ of~$C$.
    In fact, it is straightforward to check that each chamber of $C'$ is a 3-cell, so $C'$ is a triangulation of $\Sbb^3$. Observe that $C'$ does not have parallel edges. Moreover, the triangulation~$C'$ can be constructed from~$G$ in polynomial time.

    It remains to show that $G$ is 3-colourable if and only if the 1-skeleton of $C'$ is 5-colourable. If~$G$ has a proper 3-colouring $g \colon V(G) \to [3]$, we extend $g$ to a proper 5-colouring $c \colon V(C') \to [5]$ of the 1-skeleton of $C'$ by letting $c(v_1):= c(y_f):=4$ for every $f \in F$, and $c(x_f):=c(v_2):=5$ for every $f \in F$.
    Conversely, let $c \colon V(C') \to [5]$ be a proper 5-colouring of the 1-skeleton of $C'$. If~$G$ is bipartite then we are done. Otherwise there is a face $f \in F$ on an odd number of vertices. So the vertices on the boundary of~$f$ receive three distinct colours by $g$. Note that $x_f$ and $y_f$ are adjacent to $v_1$ and $v_2$, respectively, and to all vertices on the boundary of~$f$. So if $c(v_1)=c(v_2)$ then $c(x_f)=c(y_f)$, a contradiction. Therefore, say $4 = c(v_1) \neq c(v_2) = 5$. Since $v_1$ and $v_2$ are both adjacent to every vertex of $G$ in $C'$, the vertices of $G$ are coloured with $1,2$ and~3 only.
\end{proof}

\begin{prop}
    Let $d \geq 3$ be an integer. It is $NP$-complete to decide whether the 1-skeleton of a given triangulation of $\Sbb^d$ is $(d+2)$-colourable.
\end{prop}
\begin{proof}
    We proceed by induction on $d$.
    For $d=3$ the statement follows from \autoref{lem:complexity-5-colouring}.
    Assume that for some $d \geq 3$, it is $NP$-complete to decide whether a given triangulation of $\Sbb^d$ is $(d+2)$-colourable. Let $C$ be a triangulation of $\Sbb^d$ and $C'$ be the double-cone of $C$, i.e.\ $C'$ is a triangulation of $\Sbb^{d+1}$ with $V(C') = V(C) \,\dot\cup\, \{x,y\}$ and $E(C') = E(C) \cup \{xu,yu \mid u \in V(C)\}$. The same construction is used in \autoref{thm:counterexample}. Then $C$ is $(d+2)$-colourable if and only if $C'$ is $((d+1)+2)$-colourable.
\end{proof}

\section{Proof of the `moreover'-part of \autoref{thm:subdivisions}}
\label{sec:2-edge-colouring}

Given a graph $G$ and an edge-colouring $\psi$, we say that a triangle in $G$ is \emph{monochromatic} if all of its edges have the same colour in $\psi$.
Let $R_k(3)$ be the smallest integer $n \in \Nbb$ such that every $k$-edge-colouring of $K_n$ contains a monochromatic triangle. It is known that $R_2(3)=6$ and $R_3(3)=17$.

\begin{lem}
    Every $(R_k(3)-1)$-colourable graph has a $k$-edge-colouring without monochromatic triangles.
\end{lem}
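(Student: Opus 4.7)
The plan is to combine a proper vertex-colouring of $G$ with a triangle-free $k$-edge-colouring of a complete graph, pulling the latter back along the former.

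First, using the hypothesis that $G$ is $(R_k(3)-1)$-colourable, I would fix a proper vertex-colouring $\phi \colon V(G) \to \{1, 2, \dots, R_k(3)-1\}$. Next, by the very definition of the Ramsey number $R_k(3)$, the complete graph $K_{R_k(3)-1}$ admits some $k$-edge-colouring $\chi$ that avoids monochromatic triangles; fix such a $\chi$, viewing its vertex set as $\{1, 2, \dots, R_k(3)-1\}$.

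Now I would define the edge-colouring $\psi$ of $G$ by pullback along $\phi$: for an edge $uv \in E(G)$, note that $\phi(u) \neq \phi(v)$ because $\phi$ is proper, so the pair $\phi(u)\phi(v)$ is a genuine edge of $K_{R_k(3)-1}$, and we set $\psi(uv) := \chi(\phi(u)\phi(v))$.

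Finally, to verify that $\psi$ has no monochromatic triangle, consider any triangle $uvw$ in $G$. Since $\phi$ is a proper vertex-colouring, the three values $\phi(u), \phi(v), \phi(w)$ are pairwise distinct, so $\phi(u)\phi(v)\phi(w)$ is a triangle in $K_{R_k(3)-1}$. By the choice of $\chi$, at least two of the three edges of this triangle receive different colours under $\chi$; the same pair of edges in $G$ therefore receive different colours under $\psi$. The only conceptual step is recognising that a proper vertex-colouring sends triangles to triangles, which is exactly what allows the pullback to preserve the ``no monochromatic triangle'' property; there is no real obstacle.
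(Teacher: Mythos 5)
Your proposal is correct and follows essentially the same argument as the paper: fix a proper $(R_k(3)-1)$-vertex-colouring, pull back a triangle-free $k$-edge-colouring of $K_{R_k(3)-1}$ along it, and observe that proper colourings send triangles to triangles.
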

\begin{proof}
    Let $n:=R_k(3)-1$.
    Let $G$ be a graph with a proper $n$-colouring $\varphi \colon V(G) \to [n]$. Let $\psi \colon E(K_n) \to [k]$ be a $k$-edge-colouring of $K_n$ without monochromatic triangles.
    We define a $k$-edge-colouring $\psi' \colon E(G) \to [k]$ by assigning to the edge $e=uv$ the colour $\psi'(e):=\psi(\{\varphi(u),\varphi(v)\})$.

    We need to show that every triangle $u_1 e_1 u_2 e_2 u_3 e_3 u_1$ has two edges of distinct colours.
    Since~$\varphi$ is a proper $n$-colouring of $G$, we have that $\varphi(u_1) \varphi(u_2) \varphi(u_3)$ is a triangle in $K_n$. Since $\psi$ is an edge-colouring of $K_n$ without monochromatic triangles, we have without loss of generality $\psi(\{\varphi(u_1), \varphi(u_2)\}) \neq \psi(\{\varphi(u_2), \varphi(u_3)\})$. Hence $\psi'(e_1) \neq \psi'(e_2)$.
\end{proof}
\begin{cor}
\label{cor:edge-colouring-without-monochromatic-faces}
    Every triangulation of $\Sbb^3$ whose 1-skeleton is $(R_k(3)-1)$-colourable has a $k$-edge-colouring without monochromatic faces.\hfill$\square$
\end{cor}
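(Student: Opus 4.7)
The plan is to apply the preceding lemma directly to the $1$-skeleton of the triangulation, viewed as an abstract graph, and then translate the conclusion from triangles in the $1$-skeleton to $2$-cells of the triangulation.

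Concretely, let $C$ be a triangulation of $\Sbb^3$ whose $1$-skeleton $G$ is $(R_k(3)-1)$-colourable. Since $G$ is a $(R_k(3)-1)$-colourable graph, the previous lemma supplies a $k$-edge-colouring $\psi' \colon E(G) \to [k]$ such that no triangle of $G$ is monochromatic under $\psi'$. This same map $\psi'$ is automatically a $k$-edge-colouring of $C$, because the edges of $C$ are precisely the edges of its $1$-skeleton.

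It remains to check that $\psi'$ has no monochromatic $2$-cells. For this, recall that $C$ is a \emph{simplicial} $2$-complex (embedded in $\Sbb^3$) in which every $2$-cell is a $2$-simplex, i.e.\ bounded by three edges forming a triangle in $G$. Hence every $2$-cell of $C$ corresponds to a triangle in the $1$-skeleton, and since no such triangle is monochromatic under $\psi'$, no $2$-cell is monochromatic either. This proves the corollary.

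The main (and essentially only) point is the observation that faces of a triangulation sit inside the $1$-skeleton as honest triangles, so the combinatorial statement about triangles in a graph transfers without loss to a statement about $2$-cells of a triangulation; there is no geometric obstacle because we are working purely with vertex-colourings of the $1$-skeleton and then edge-colouring that graph.
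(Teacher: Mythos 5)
Your proof is correct and is precisely the intended argument: the paper marks this as an immediate corollary of the preceding lemma, obtained by applying that lemma to the $1$-skeleton and observing that every $2$-cell of the triangulation is a triangle in the $1$-skeleton.
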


We prove the `moreover'-part of \autoref{thm:subdivisions} by showing the following stronger result.
\begin{thm}
    Let $C$ be a triangulation of $\Sbb^3$. Then the following statements are equivalent.
    \begin{enumerate}[(1)]
        \item\label{itm:2-edge-colouring-1} The 1-skeleton of $C$ is $5$-colourable.
        \item\label{itm:2-edge-colouring-2} Every subdivision of $C$ has a 2-edge-colouring without monochromatic\footnote{A face is \emph{monochromatic} if all edges on its boundary have the same colour.} faces.
        \item\label{itm:2-edge-colouring-3} The maximal subdivision $C'$ of $C$ (that is obtained from $C$ by subdividing every chamber) has a 2-edge-colouring without monochromatic faces.
        \item\label{itm:2-edge-colouring-4} The triangulation $C$ has a 2-edge-colouring such that on the boundary of each chamber, every colour forms a path of length 3.
        \item\label{itm:2-edge-colouring-5} There exists a subdivision $C'$ of $C$ such that for every edge in $C'$, the number of incident faces is divisible by three.
    \end{enumerate}
\end{thm}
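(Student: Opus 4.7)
The plan is to use the equivalence $(\ref{itm:2-edge-colouring-1}) \Leftrightarrow (\ref{itm:2-edge-colouring-5})$ already furnished by \autoref{thm:subdivisions}, and to close the cycle of implications $(\ref{itm:2-edge-colouring-1}) \Rightarrow (\ref{itm:2-edge-colouring-2}) \Rightarrow (\ref{itm:2-edge-colouring-3}) \Rightarrow (\ref{itm:2-edge-colouring-4}) \Rightarrow (\ref{itm:2-edge-colouring-1})$.

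For $(\ref{itm:2-edge-colouring-1}) \Rightarrow (\ref{itm:2-edge-colouring-2})$, I would extend any proper 5-colouring of the 1-skeleton of $C$ to an arbitrary subdivision $C''$ of $C$ by giving each new vertex $v_t$ the unique colour in $\{0, 1, 2, 3, 4\}$ not used by the four vertices of its parent chamber; then \autoref{cor:edge-colouring-without-monochromatic-faces} with $k = 2$ (using $R_2(3) = 6$) supplies the desired 2-edge-colouring of $C''$ without monochromatic faces. The implication $(\ref{itm:2-edge-colouring-2}) \Rightarrow (\ref{itm:2-edge-colouring-3})$ is immediate. For $(\ref{itm:2-edge-colouring-3}) \Rightarrow (\ref{itm:2-edge-colouring-4})$, I would restrict a given 2-edge-colouring $\chi$ of $C'$ to $C$ and inspect each chamber $t$ of $C$: its four triangular boundary faces remain faces of $C'$ and hence non-monochromatic, so neither colour class of the induced 2-edge-colouring of $K_4(t)$ contains a triangle. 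A short combinatorial check then implies that this 2-edge-colouring of $K_4(t)$ is of one of exactly two types: (i) a perfect matching together with a $K_{2,2}$, or (ii) two complementary Hamiltonian paths. To rule out type (i), I would invoke the six further non-monochromaticity constraints coming from the new triangles $v_t u_i u_j$ inside $t$ in $C'$ (where $v_t$ is the subdivision vertex of $t$); a case analysis shows that the resulting six Boolean constraints on the colours of the four edges $v_t u_i$ are jointly unsatisfiable when $K_4(t)$ is of type (i). Hence $K_4(t)$ is of type (ii), which is $(\ref{itm:2-edge-colouring-4})$.

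For $(\ref{itm:2-edge-colouring-4}) \Rightarrow (\ref{itm:2-edge-colouring-1})$, I plan to apply the \nameref{lem:local-global-colouring} with $k = 5$. The key observation is that $K_5$ decomposes into two edge-disjoint Hamiltonian cycles $\cH_0, \cH_1$, and that deleting any vertex $c$ from this decomposition leaves $K_4$ decomposed into two Hamiltonian paths whose endpoints are the two $\cH_0$-neighbours and the two $\cH_1$-neighbours of $c$, respectively. Using $(\ref{itm:2-edge-colouring-4})$, each chamber $t$ of $C$ then admits a canonical 2-edge-coloured completion to a copy of $K_5$ by adjoining a ``ghost'' vertex $\ast_t$ joined to the endpoints of $P_0(t)$ in colour $0$ and to the endpoints of $P_1(t)$ in colour $1$. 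For each directed dual edge $\vec e = st$ with shared face $f$, I would take $g_{\vec e} \colon \Sigma_5 \to \Sigma_5$ to be the bijection on colourings corresponding to the unique colour-preserving isomorphism $K_5(s) \to K_5(t)$ fixing the three vertices of $f$.

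The main obstacle will be verifying the balance condition \autoref{dfn:locally-colourable}(\ref{itm:locally-colourable-1}). Traversing the $e$-cycle of any edge $e = v_1 v_2$ of $C$, the composition of these isomorphisms produces an automorphism of the 2-edge-coloured $K_5(t_0)$ that pointwise fixes $v_1$ and $v_2$. The colour-preserving automorphism group of $K_5$ with its Hamiltonian 2-cycle decomposition is the dihedral group $D_5$ (each non-identity rotation fixes no vertex and each reflection fixes exactly one vertex), and therefore the pointwise stabiliser of any two distinct vertices is trivial. Hence this composition is the identity, the balance condition holds, and the \nameref{lem:local-global-colouring} yields a 5-colouring of the 1-skeleton of $C$. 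Combined with $(\ref{itm:2-edge-colouring-1}) \Leftrightarrow (\ref{itm:2-edge-colouring-5})$ from \autoref{thm:subdivisions}, this closes the proof.
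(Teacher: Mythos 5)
Your proposal is correct, and it takes a genuinely different route for one implication. The paper closes the cycle $(\ref{itm:2-edge-colouring-1}) \Rightarrow (\ref{itm:2-edge-colouring-2}) \Rightarrow (\ref{itm:2-edge-colouring-3}) \Rightarrow (\ref{itm:2-edge-colouring-4}) \Rightarrow (\ref{itm:2-edge-colouring-5}) \Rightarrow (\ref{itm:2-edge-colouring-1})$, whereas you close $(\ref{itm:2-edge-colouring-1}) \Rightarrow \cdots \Rightarrow (\ref{itm:2-edge-colouring-4}) \Rightarrow (\ref{itm:2-edge-colouring-1})$ and attach $(\ref{itm:2-edge-colouring-5})$ via \autoref{thm:subdivisions}. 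Your $(\ref{itm:2-edge-colouring-1}) \Rightarrow (\ref{itm:2-edge-colouring-2}) \Rightarrow (\ref{itm:2-edge-colouring-3}) \Rightarrow (\ref{itm:2-edge-colouring-4})$ matches the paper (the $(\ref{itm:2-edge-colouring-3}) \Rightarrow (\ref{itm:2-edge-colouring-4})$ case split into ``matching plus $C_4$'' vs.\ ``two complementary Hamiltonian paths'' and the unsatisfiability check on the six new triangles is a repackaging of the paper's contradiction argument). The genuinely new content is $(\ref{itm:2-edge-colouring-4}) \Rightarrow (\ref{itm:2-edge-colouring-1})$. The paper instead proves $(\ref{itm:2-edge-colouring-4}) \Rightarrow (\ref{itm:2-edge-colouring-5})$ by declaring each chamber even or odd according to whether its red Hamiltonian path induces the positive orientation, subdividing the odd chambers, verifying that all resulting chambers are even, and then tracking a length-$3$-periodic colour pattern around each edge-cycle. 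You bypass orientations entirely: the Hamiltonian-path condition $(\ref{itm:2-edge-colouring-4})$ lets you complete each chamber's edge-coloured $K_4$ to a $K_5$ carrying a Hamiltonian $2$-cycle decomposition by adjoining a ghost vertex, and you feed the unique colour-preserving $K_5$-isomorphisms fixing shared faces into the \nameref{lem:local-global-colouring} as gain maps. The balance condition then reduces to the fact that the colour-preserving automorphism group of a Hamiltonian-decomposed $K_5$ is $D_5$, whose pointwise stabiliser of two vertices is trivial --- a one-line rigidity argument in place of the paper's parity bookkeeping. Both routes ultimately invoke the \nameref{lem:local-global-colouring}, but yours applies it directly to $C$ rather than to a further subdivision, while the paper's route has the side benefit of exhibiting the divisibility condition $(\ref{itm:2-edge-colouring-5})$ as a direct combinatorial consequence of the edge-colouring $(\ref{itm:2-edge-colouring-4})$.
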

\begin{proof}
    (\ref{itm:2-edge-colouring-1}) $\Rightarrow$ (\ref{itm:2-edge-colouring-2}). Let $C$ be a triangulation of $\Sbb^3$ whose 1-skeleton is 5-colourable. Then the 1-skeleton of any subdivision $C'$ of $C$ is again 5-colourable. Since $R_2(3)=6$, we can construct a 2-edge-colouring of $C'$ without monochromatic faces by \autoref{cor:edge-colouring-without-monochromatic-faces}.

    (\ref{itm:2-edge-colouring-2}) $\Rightarrow$ (\ref{itm:2-edge-colouring-3}) is trivial.

    (\ref{itm:2-edge-colouring-3}) $\Rightarrow$ (\ref{itm:2-edge-colouring-4}). Let $C$ be a triangulation of $\Sbb^3$ and let $C'$ be maximal subdivision of $C$. Let $\psi \colon E(C') \to \{\text{red},\text{blue}\}$ be a 2-edge-colouring of $C'$ without monochromatic faces. Obviously, $\psi|_{E(C)}$ is a 2-edge-colouring of $C$ without monochromatic faces.
    Let $t$ be a chamber of $C$ on the vertices $u_1,u_2,u_3,u_4$.
    Let $E(t)$ be the set of edges on the boundary of the chamber $t$, i.e. $E(t):=\{u_i u_j \mid 1 \leq i < j \leq 4\}$.
    It suffices to show that both colours appear exactly three times in $E(t)$.

    We assume for contradiction that one color, say red, appears less than three times in $E(t)$.
    If red appears in at most one edge of $E(t)$, then one can find a monochromatic face on the boundary of $t$, a contradiction.
    So, red appears twice and blue appears four times in $E(t)$. Since we do not have monochromatic faces on the boundary of $t$, the red edges in $E(t)$ must form a matching, say $\psi(u_1u_2)=\psi(u_3u_4)=\text{red}$. Let $v$ be the vertex of $C'$ that was added when subdividing $t$. Since the edges of the triangle $u_1u_2v$ must contain both colours and since $\psi(u_1u_2)=\text{red}$, we have without loss of generality $\psi(u_1v)=\text{blue}$. Since the triangles $u_1 u_4 v$ and $u_1 u_3 v$ must contain both colours and since we have $\psi(u_1 u_4)=\psi(u_1 v)=\psi(u_1 u_3) = \text{blue}$, we must have $\psi(u_4 v)=\psi(u_3 v)=\text{red}$. But then, the triangle $u_3 u_4 v$ is monochromatic, a contradiction.

    (\ref{itm:2-edge-colouring-4}) $\Rightarrow$ (\ref{itm:2-edge-colouring-5}).
    Let $\psi \colon E(C) \to \{\text{red},\text{blue}\}$ be a 2-edge-colouring such that on the boundary of each chamber, every color forms a path.
    We say that a chamber $t$ is \emph{even} if the order of the vertices along the red path on the boundary of $t$ induces a positive orientation of $t$; otherwise we say that~$t$ is \emph{odd}.
    Let $C'$ be the triangulation of $\Sbb^3$ that is obtained from $C$ by subdividing every odd chamber. We show that for every edge in $C'$, the number of incident faces is divisible by three.

    First, observe that there exists a (unique) way to extend the 2-edge-colouring of $C$ to a 2-edge-colouring $\psi' \colon E(C') \to \{\text{red},\text{blue}\}$ of $C'$ such that on the boundary of each chamber, every color forms a path. Indeed, let $v_0v_1v_2v_3$ be an order of the vertices of a subdivided chamber~$t$ along the red path and let~$v$ be the vertex that is added when subdividing~$t$. Then we put $\psi'(vv_0)=\psi'(vv_3)=\text{red}$ and $\psi'(vv_1)=\psi'(vv_2)=\text{blue}$. Then it is straightforward to check that for each of the four chambers of $C'$ included in $t$, every colour forms a path of length 3 on the boundary of the respective chamber.
    Moreover, the following fact is also straightforward to check, see \autoref{fig:edge-colouring-subdivision}.
    \begin{figure}
        \centering
        \includegraphics[width=0.4\textwidth]{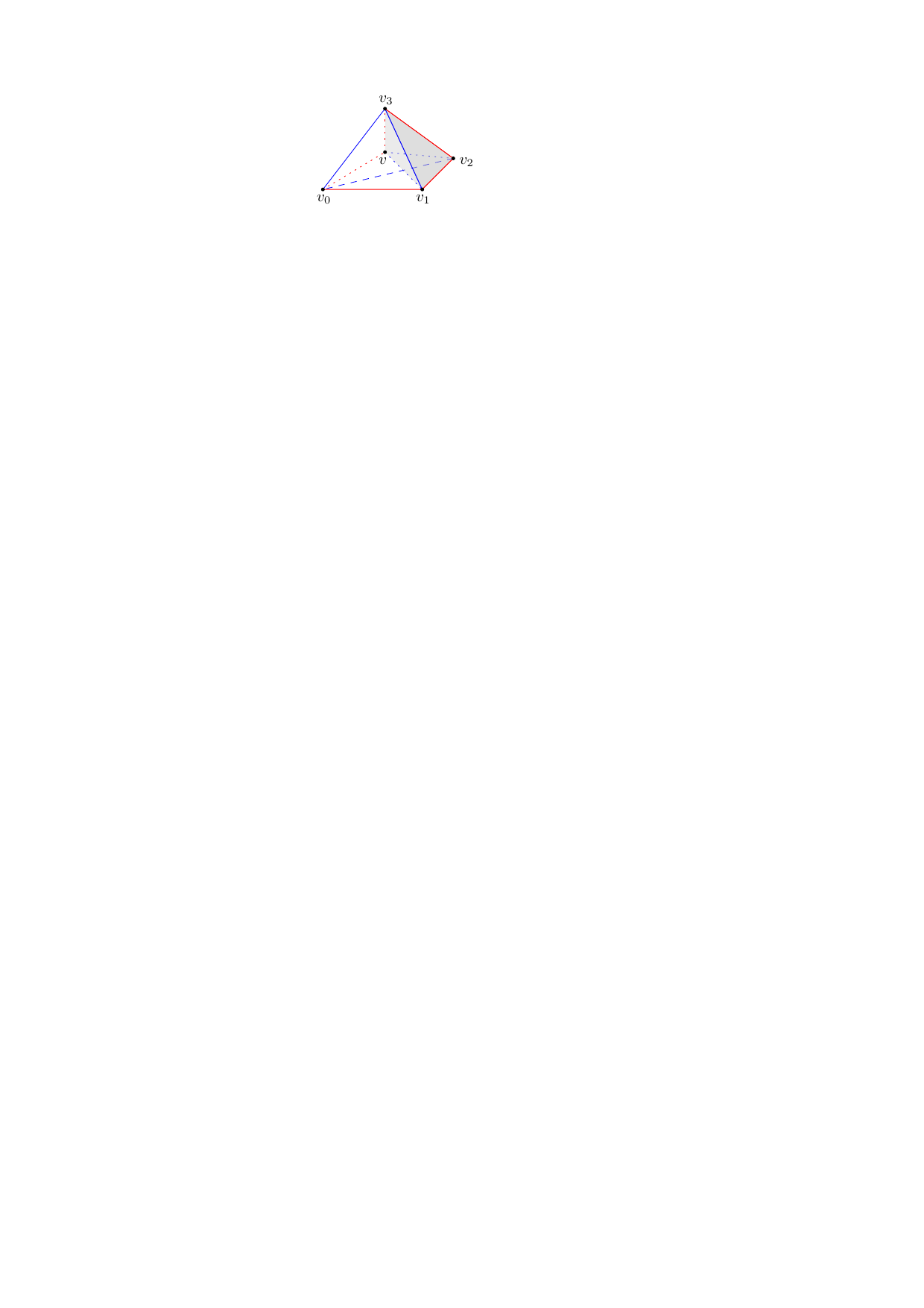}
        \caption{If the chamber $t$ of $C$ on the red path $v_0 v_1 v_2 v_3$ is odd, then the chamber $t'$ of $C'$ on the red path $v_1 v_2 v_3 v$ is even.}
        \label{fig:edge-colouring-subdivision}
    \end{figure}
    \begin{equation}
        \label{eq:all-chambers-even}
        \text{All chambers of $C'$ are even with respect to the 2-edge-colouring $\psi'$.}
    \end{equation}
    Now, we show that for every edge $e$ in $C'$, the number of incident faces is divisible by three. Let $c=t_0 f_1 t_1 f_2 t_2 \dots t_{k-1} f_k t_0$ be the cyclic ordering of the faces $f_1,\dots,f_k$ and chambers $t_0,t_1,\dots,t_{k-1}$ around $e$ induced by the embedding of $C'$ in $\Sbb^3$. Let $w_i$ be the vertex of $f_i$ that is not in $e$, for $i=1,\dots,k$. Let $u$ and $v$ be the endvertices of $e$ such that for all $i=1,\dots,k$, the ordering $u,v,w_i,w_{i+1}$ induces a positive orientation of the corresponding chamber.
    We distinguish two cases.

    Assume that the edge $e$ is red in $\psi'$. If $\psi'(uw_i)=\psi'(vw_i)=\text{blue}$, then we have $\psi'(uw_{i+1})=\text{red}$ and $\psi'(vw_{i+1})=\text{blue}$ since an ordering of the vertices along the red path (here $w_i w_{i+1} u v$) induces a positive orientation on the corresponding chamber by~\ref{eq:all-chambers-even}. Analogously, if $\psi'(uw_{i})=\text{red}$ and $\psi'(vw_{i})=\text{blue}$ then $\psi'(uw_{i+1})=\text{blue}$ and $\psi'(vw_{i+1})=\text{red}$. Analogously, if $\psi'(uw_{i})=\text{blue}$ and $\psi'(vw_{i})=\text{red}$ then $\psi'(uw_{i+1})=\psi'(vw_{i+1})=\text{blue}$. Therefore, if $e$ is red then the number of incident faces is divisible by three.

    Now, assume that the edge $e$ is blue in $\psi'$. 
    This case is similar to the previous case.
    If $\psi'(uw_i)=\psi'(vw_i)=\text{red}$, then we have $\psi'(uw_{i+1})=\text{red}$ and $\psi'(vw_{i+1})=\text{blue}$ since an ordering of the vertices along the red path (here $v w_i u w_{i+1}$) induces a positive orientation on the corresponding chamber by~\ref{eq:all-chambers-even}. Analogously, if $\psi'(uw_{i})=\text{red}$ and $\psi'(vw_{i})=\text{blue}$ then $\psi'(uw_{i+1})=\text{blue}$ and $\psi'(vw_{i+1})=\text{red}$. Analogously, if $\psi'(uw_{i})=\text{blue}$ and $\psi'(vw_{i})=\text{red}$ then $\psi'(uw_{i+1})=\psi'(vw_{i+1})=\text{red}$. Therefore, if $e$ is blue then the number of incident faces is divisible by three.

    (\ref{itm:2-edge-colouring-5}) $\Rightarrow$ (\ref{itm:2-edge-colouring-1}) follows from \autoref{thm:subdivisions}.
\end{proof}

\begin{cor}
\label{cor:complete-subdivision-K6}
    Let $C$ be a triangulation of $\Sbb^3$ whose 1-skeleton is the complete graph $K_6$, and let $C'$ be the maximal subdivision of $C$. Then every 2-edge-colouring of $C'$ has a monochromatic face.\hfill$\square$
\end{cor}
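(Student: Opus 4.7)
The plan is to derive this corollary directly from the equivalence (\ref{itm:2-edge-colouring-1}) $\Leftrightarrow$ (\ref{itm:2-edge-colouring-3}) of the preceding theorem by contraposition. Recall that (\ref{itm:2-edge-colouring-1}) asserts that the 1-skeleton of $C$ is 5-colourable, while (\ref{itm:2-edge-colouring-3}) asserts that the maximal subdivision $C'$ admits a 2-edge-colouring without monochromatic faces. Hence it is enough to observe that the 1-skeleton of $C$, being the complete graph $K_6$, has chromatic number $6>5$, so (\ref{itm:2-edge-colouring-1}) fails, and therefore (\ref{itm:2-edge-colouring-3}) must fail as well. That is exactly the statement of the corollary.

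In more detail, I would phrase the argument as follows. Suppose, for contradiction, that $C'$ admits a 2-edge-colouring $\psi'$ with no monochromatic face. Then statement (\ref{itm:2-edge-colouring-3}) of the theorem holds for $C$. By the theorem, statement (\ref{itm:2-edge-colouring-1}) also holds for $C$, i.e.\ the 1-skeleton of $C$ is 5-colourable. But the 1-skeleton of $C$ is $K_6$, which requires six colours in any proper vertex colouring, a contradiction.

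There is no real obstacle here: the hard work has already been done in establishing the chain of equivalences in the preceding theorem. The only ingredients needed are the trivial facts $\chi(K_6)=6$ and that $K_6$ is not 5-colourable, together with the logical contrapositive of the implication (\ref{itm:2-edge-colouring-3}) $\Rightarrow$ (\ref{itm:2-edge-colouring-1}). Existence of a triangulation of $\Sbb^3$ with 1-skeleton $K_6$ is already ensured by \autoref{thm:Kn-in-3D}, so the hypothesis of the corollary is non-vacuous.
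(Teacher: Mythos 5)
Your proposal is correct and is precisely the intended argument: the corollary follows immediately from the equivalence (\ref{itm:2-edge-colouring-1}) $\Leftrightarrow$ (\ref{itm:2-edge-colouring-3}) of the preceding theorem together with the fact that $K_6$ is not $5$-colourable, which is why the paper marks the statement with $\square$ and omits a proof.
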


We complement \autoref{cor:complete-subdivision-K6} by showing that every simplicial complex embedded in $\Sbb^3$ has a 4-edge-colouring without monochromatic faces. Additionally, we ask whether any triangulation of $\Sbb^3$ has a 3-edge-colouring without monochromatic faces, see \autoref{que:3}.

Recall that the \emph{link graph} of a simplicial 2-complex $C$ at a vertex $v$ is the graph $L$ whose vertices are the edges incident to $v$ in $C$, and two vertices $e_1,e_2 \in E(C)$ share an edge in $L$ if $e_1,e_2$ viewed as edges in $C$ share a face in $C$. If the simplicial complex $C$ can be embedded in $\Sbb^3$, then the link graph at each vertex of $C$ is planar.

\begin{prop}
\label{prop:4-edge-colouring}
    Let $C$ be a simplicial 2-complex embedded in $\Sbb^3$. Then $C$ has a 4-edge-colouring without monochromatic faces.
\end{prop}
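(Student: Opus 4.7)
The plan is to apply the four colour theorem to the link graphs of $C$. The key observation is that for each vertex $v$ of $C$, the link graph $L_v$ is planar (as noted just before the proposition), and so admits a proper 4-colouring by the four colour theorem.

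Concretely, I would fix an arbitrary linear ordering $<$ of the vertices of $C$ and, for each vertex $v$, fix a proper 4-colouring $c_v$ of $L_v$ with colours in $\{1,2,3,4\}$. For every edge $e = uv$ of $C$ with $u<v$, I would then define the colour of $e$ to be $c_u(e)$, where $e$ is viewed as a vertex of $L_u$.

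To verify that this is a 4-edge-colouring without monochromatic faces, consider any face $f$ on vertices $u<v<w$. Both edges $uv$ and $uw$ have $u$ as their smaller endpoint, so both receive colours from $c_u$; moreover, viewed as vertices of $L_u$ they are adjacent since they share the face $f$. Because $c_u$ is a proper colouring of $L_u$, the edges $uv$ and $uw$ receive different colours, and hence $f$ is not monochromatic. The only substantive input is the four colour theorem; once one spots that two of the three edges of every face share a common smallest endpoint, the proof reduces to a single application of this theorem at each vertex independently, so I do not anticipate a significant obstacle.
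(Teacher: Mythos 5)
Your proposal is correct and is essentially the paper's argument presented non-inductively: the paper removes a vertex $v$, colours $C-v$ by induction, and then colours all edges incident to $v$ using a $4$-colouring of the link graph at $v$; unrolling that induction produces exactly your vertex ordering, where every edge is coloured from the link graph of its smaller endpoint. The verification in both cases rests on the same observation that two of the three edges of each face share a common smallest vertex and are adjacent in that vertex's link graph.
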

\begin{proof}
    We prove this proposition by induction on the number of vertices $n$ of $C$.
    If $n=1$, then the statement trivially holds.
    Hence, let $n \geq 2$ and let $v$ be a vertex of $C$. Let $C-v$ be the simplicial complex where we remove all cells (i.e.\ vertices, edges and faces) that contain $v$. Then, $C-v$ has a 4-edge-colouring without monochromatic faces by the induction hypothesis. 

    Consider the link graph $L$ of $C$ at $v$. Since $L$ is planar, it has a 4-colouring $\varphi$ by the four-colour theorem~\cite{Appel_1989}. We extend the 4-edge-colouring of $C-v$ to $C$ by giving every edge $e$ that contains the vertex $v$ the colour $\varphi(e)$.

    We need to show that $C$ does not contain a monochromatic face. If the face $f$ does not contain the vertex $v$, it is also a face in $C-v$ and therefore not monochromatic. Hence, assume that the face $f$ contains the vertex $v$, i.e.\ $f$ is bounded by the edges $e_1,e_2,e_3$ such that $e_1$ and $e_2$ are incident to $v$. Then, $e_1$ and $e_2$ are adjacent vertices in the link graph $L$ and therefore receive distinct colours.
\end{proof}

\section{Edge and face colourings}
\label{sec:edge-colourings}

In this section, we show how to obtain an edge colouring from a vertex colouring of a simplicial complex.
An edge-colouring of a simplicial complex $C$ is \emph{proper} if for every face (i.e.\ $2$-cell) $f$, every two distinct edges $e_1$ and $e_2$ adjacent in $f$ receive distinct colours.

In order to derive a proper edge-colouring from a vertex-colouring, we use the following fact about 1-factorizations of complete graphs.
A \emph{$1$-factorization} of a graph $G$ is a partition of its edge set into perfect matchings.
It is well-known that for every positive integer $n$, the complete graph~$K_{2n}$ has a $1$-factorization, see~\cite{mendelsohnrosa} for example.

\begin{prop}
\label{prop:vertex-coloring-implies-edge-coloring}
    Let $k$ be a positive integer and $C$ be a simplicial complex whose $1$-skeleton has a proper $2k$-colouring.
    Then $C$ can be edge-coloured with $2k-1$ colours.
\end{prop}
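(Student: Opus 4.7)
The plan is to pull back a $1$-factorization of $K_{2k}$ through the given vertex-colouring. Let $\varphi \colon V(C) \to \{1,2,\ldots,2k\}$ be a proper $2k$-colouring of the $1$-skeleton of $C$, and fix a $1$-factorization $M_1, M_2, \ldots, M_{2k-1}$ of the complete graph $K_{2k}$ on vertex set $\{1,2,\ldots,2k\}$, which exists by the fact cited above.

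For each edge $e=uv$ of $C$, the colours $\varphi(u)$ and $\varphi(v)$ are distinct because $\varphi$ is proper, so $\{\varphi(u),\varphi(v)\}$ is an edge of $K_{2k}$, and therefore lies in exactly one of the perfect matchings $M_i$. I would then define the edge-colouring $\psi \colon E(C) \to \{1,2,\ldots,2k-1\}$ by setting $\psi(e) := i$, where $i$ is the unique index with $\{\varphi(u),\varphi(v)\} \in M_i$.

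To verify that $\psi$ is proper in the sense of this section, consider a face $f$ of $C$ and two distinct edges $e_1, e_2$ of $f$ that share a vertex, say $e_1 = uv$ and $e_2 = uw$ with $v \neq w$. Since $\varphi$ is proper on the triangle $uvw$, the three colours $\varphi(u), \varphi(v), \varphi(w)$ are pairwise distinct, so $\{\varphi(u),\varphi(v)\}$ and $\{\varphi(u),\varphi(w)\}$ are two distinct edges of $K_{2k}$ that meet at the vertex $\varphi(u)$. In any perfect matching of $K_{2k}$ each vertex is covered by exactly one edge, so these two edges cannot lie in the same $M_i$, and hence $\psi(e_1) \neq \psi(e_2)$, as required.

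There is no substantial obstacle here: once the $1$-factorization is in hand the argument is immediate, and the only point worth flagging is the (trivial) observation that adjacent edges in $C$ map under $\varphi$ to adjacent edges in $K_{2k}$, which is exactly the condition that forces them into different colour classes of the $1$-factorization.
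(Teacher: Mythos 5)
Your proposal is correct and takes essentially the same approach as the paper: pull back a $1$-factorization of $K_{2k}$ through the vertex-colouring $\varphi$ and observe that adjacent edges of a face map to edges of $K_{2k}$ sharing a vertex, which cannot lie in the same perfect matching. The only cosmetic difference is that you argue the properness of $\psi$ directly, while the paper phrases it as a short proof by contradiction.
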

\begin{proof}
    Let $\varphi \colon V(C) \to [2k]$ be a $2k$-colouring of the $1$-skeleton of $C$.
    Let $E(K_{2n}) = M_1 \dot\cup M_2 \dot\cup \dots \dot\cup\allowbreak M_{2k-1}$ be a $1$-factorization of the complete graph $K_{2n}$ on the vertex set $[2k]$.
    To each edge~$uv$ of~$C$ we assign the colour~$i$ with $\{\varphi(u),\varphi(v)\} \in M_i$.
    Since $\varphi(u) \neq \varphi(v)$ for all edges $uv$ in $C$, this is a well-defined edge-colouring.
    We denote this edge-colouring by the function $\psi \colon E(C) \to [2k-1]$.
    We need to show that every face $f=uvw$ satisfies $\psi(uv) \neq \psi(vw) \neq \psi(wu) \neq \psi(uv)$.
    Suppose for contradiction that two edges of $f$ have the same colour, without loss of generality $\psi(uv) = \psi(vw)$.
    Then $\{\varphi(u), \varphi(v)\}$ and $\{\varphi(v), \varphi(w)\}$ are in the same perfect matching $M_i$ of the $1$-factorization of $K_{2n}$.
    Therefore $\varphi(u) = \varphi(w)$, a contradiction since there exists an edge $uw$ in $C$.
\end{proof}

For $k=2$, the converse of~\autoref{prop:vertex-coloring-implies-edge-coloring} holds for triangulations $C$ of $\Sbb^3$, as shown by Carmesin, Nevinson and Saunders~\cite{carmesin2022characterisation}, i.e.\ $C$ has a proper 3-edge-colouring if and only if its 1-skeleton has a proper 4-colouring.
However, the converse of \autoref{prop:vertex-coloring-implies-edge-coloring} does not hold in general, even for triangulations of $\Sbb^3$.
By~\cite{ueckerdtpersonalcommunication}, for every $n \geq 4$ there exists a triangulation of $\Sbb^3$ whose 1-skeleton is the complete graph $K_n$, but which is 12-edge-colourable by~\cite{Kurkofka_2025}.
In particular, $13$-edge-colourability of $C$ (in this case $k=7$) does not imply any upper bound on the chromatic number of the 1-skeleton of $C$.

\begin{cor}
    Every triangulation of $\Sbb^d$ such that every $(d-2)$-cell is incident with an even number of $(d-1)$-cells is edge-colourable with $d$ colours if $d$ is odd, and with $d+1$ colours if $d$ is even.
\end{cor}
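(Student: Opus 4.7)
The plan is to combine \autoref{thm:heawood} with \autoref{prop:vertex-coloring-implies-edge-coloring}. First, I would apply \autoref{thm:heawood} (specifically the implication (\ref{itm:main-heawood-2}) $\Rightarrow$ (\ref{itm:main-heawood-1})) to conclude that the $1$-skeleton of $C$ admits a proper $(d+1)$-colouring. This is precisely the hypothesis that feeds into \autoref{prop:vertex-coloring-implies-edge-coloring}, which translates vertex-colourings into edge-colourings via a $1$-factorization of a complete graph on an even number of colours.

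If $d$ is odd, then $d+1$ is even, so I set $2k = d+1$, and \autoref{prop:vertex-coloring-implies-edge-coloring} yields an edge-colouring of $C$ using $2k-1 = d$ colours, as desired.

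If $d$ is even, then $d+1$ is odd, and I cannot apply \autoref{prop:vertex-coloring-implies-edge-coloring} directly. However, any proper $(d+1)$-colouring of the $1$-skeleton is trivially also a proper $(d+2)$-colouring (one colour is simply unused), and since $d+2$ is even I may apply \autoref{prop:vertex-coloring-implies-edge-coloring} with $2k = d+2$ to obtain an edge-colouring using $2k-1 = d+1$ colours.

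There is no real obstacle here: the corollary is an immediate packaging of the two results already in the paper, and the only subtlety is the parity split that forces the use of $d+2$ colours when $d$ is even in order to satisfy the evenness hypothesis of \autoref{prop:vertex-coloring-implies-edge-coloring}.
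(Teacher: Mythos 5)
Your proposal is correct and fills in exactly the parity bookkeeping that the paper's one-line proof leaves implicit: apply \autoref{thm:heawood} to get a proper $(d+1)$-colouring, then feed this (padded to $d+2$ colours when $d$ is even) into \autoref{prop:vertex-coloring-implies-edge-coloring}. This is the same approach as the paper.
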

\begin{proof}
    Follows from \autoref{thm:heawood} and \autoref{prop:vertex-coloring-implies-edge-coloring}.
\end{proof}
\begin{cor}
    Every subdividable triangulation of $\Sbb^d$ is edge-colourable with $d+1$ colours if $d$ is even, and with $d+2$ colours if $d$ is odd.
\end{cor}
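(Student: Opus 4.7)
The plan is to combine the two ingredients that have just been established: \autoref{thm:subdivisions}, which tells us that a subdividable triangulation $C$ of $\Sbb^d$ has a proper $(d+2)$-colouring of its $1$-skeleton, and \autoref{prop:vertex-coloring-implies-edge-coloring}, which converts any proper $2k$-colouring of the $1$-skeleton of a simplicial complex into a proper edge-colouring with $2k-1$ colours. So the whole argument boils down to picking the right parity of the number of vertex-colours and applying the proposition.

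First, I would invoke \autoref{thm:subdivisions} (the implication \ref{itm:main-subdivisions-2} $\Rightarrow$ \ref{itm:main-subdivisions-1}) to obtain a proper $(d+2)$-colouring $\varphi$ of the $1$-skeleton of $C$. The point is that \autoref{prop:vertex-coloring-implies-edge-coloring} is only stated for an even number of vertex-colours, because its proof uses a $1$-factorization of $K_{2n}$, which requires an even number of vertices. So I need to feed the proposition a colouring whose palette has even size.

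If $d$ is even, then $d+2$ is even, and I can plug $\varphi$ directly into \autoref{prop:vertex-coloring-implies-edge-coloring} with $2k=d+2$; the conclusion gives an edge-colouring of $C$ with $2k-1=d+1$ colours. If $d$ is odd, then $d+2$ is odd, so I cannot apply the proposition to $\varphi$ itself. The trivial fix is to regard $\varphi$ as a proper $(d+3)$-colouring of the $1$-skeleton (any proper $m$-colouring is also a proper $m'$-colouring for $m' \geq m$, since we are simply allowed to use extra colours we never employ). Now $d+3$ is even, and applying \autoref{prop:vertex-coloring-implies-edge-coloring} with $2k=d+3$ yields an edge-colouring of $C$ with $2k-1=d+2$ colours.

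There is essentially no obstacle here, since both ingredients are already available. The only subtle point is the parity bookkeeping: \autoref{prop:vertex-coloring-implies-edge-coloring} insists on an even palette, which is why the two cases in the statement (even versus odd $d$) differ by exactly one colour, corresponding to whether or not one has to inflate the palette from $d+2$ to $d+3$ before invoking the proposition.
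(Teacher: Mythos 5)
Your proof is correct and follows exactly the route the paper intends: invoke \autoref{thm:subdivisions} to get a proper $(d+2)$-colouring of the $1$-skeleton, then feed it into \autoref{prop:vertex-coloring-implies-edge-coloring}, padding the palette to $d+3$ when $d$ is odd so that the palette has even size. The paper states the corollary with a one-line ``follows from'' justification; you have simply filled in the parity bookkeeping that it leaves implicit.
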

\begin{proof}
    Follows from \autoref{thm:subdivisions} and \autoref{prop:vertex-coloring-implies-edge-coloring}.
\end{proof}

For triangulations of $\Sbb^3$, we can also derive a proper face-colouring from a vertex-colouring as follows.
A face-colouring of a $2$-complex embedded in $\Sbb^3$ is \emph{proper} if for every chamber, every two distinct faces on the boundary of that chamber receive different colours.
The \emph{face-chromatic number} of a triangulation $C$ of $\Sbb^3$ is the least positive integer $k$ such that there exists a proper $k$-face-colouring of $C$.
The face-chromatic number of a triangulation of $\Sbb^3$ is at least 4 since every chamber (i.e.\ tetrahedron) is incident with four faces.
For a trivial upper bound, consider the graph $G=(F,E)$ where $F$ is the set of faces of $C$ and two faces are adjacent in $G$ if they are contained in the boundary of the same chamber.
This graph has maximum degree $6$ which proves that the face-chromatic number of a triangulation of $\Sbb^3$ is at most $7$.

\begin{prop}
    Let $C$ be a triangulation of $\Sbb^3$ whose $1$-skeleton has a $5$-colouring.
    Then $C$ can be face-coloured with five colours.
    On the other hand, there exists a triangulation of $\Sbb^3$ that has no $4$-face-colouring but whose 1-skeleton has a $5$-colouring.
\end{prop}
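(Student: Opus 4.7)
The plan splits cleanly into the two halves of the proposition.

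For the first (positive) half, the idea is to construct a proper $5$-face-colouring directly from a proper vertex $5$-colouring $\varphi\colon V(C) \to \Zbb/5\Zbb$ by summing vertex colours: set $\psi(f) := \varphi(u)+\varphi(v)+\varphi(w) \pmod 5$ for each face $f=uvw$. To verify that $\psi$ is proper, I would fix a chamber $t$ whose four vertices receive four distinct colours $a_0,a_1,a_2,a_3 \in \Zbb/5\Zbb$. The four faces of $t$ then receive the sums $\sum_{j \ne i} a_j$ for $i \in \{0,1,2,3\}$, and any two such sums differ by $\pm(a_i - a_{i'})$, which is nonzero modulo $5$ because the four colours are distinct residues. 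Hence the four face colours around every chamber are pairwise distinct.

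For the second (negative) half, I would exhibit the boundary $\partial\Delta^4$ of the $4$-simplex, viewed as a triangulation of $\Sbb^3$ with $5$ vertices, $10$ edges (forming $K_5$), $10$ triangular $2$-cells, and $5$ tetrahedral chambers. Its $1$-skeleton is $K_5$, so assigning a distinct colour to each vertex gives a proper $5$-colouring. To rule out proper $4$-face-colourings, I would identify the graph $H$ on the $10$ faces whose edges correspond to pairs of faces sharing a chamber. Writing each face as a $3$-subset of $\{1,\dots,5\}$, two distinct $3$-subsets $A,B$ lie in a common $4$-subset (hence in a common chamber) iff $|A\cap B|=2$. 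Thus $H$ is the Johnson graph $J(5,3)$, which via complementation is isomorphic to $J(5,2) = L(K_5)$. Since $K_5$ has chromatic index $5$ (as $K_n$ for odd $n$ does), the chromatic number of $L(K_5)$ equals $5$, so $\partial\Delta^4$ admits no proper $4$-face-colouring.

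The main conceptual point in part one is that the arithmetic of $\Zbb/5\Zbb$ forces four sums over distinct residues to be pairwise distinct, so the global face-colouring arises without any choices; more ad hoc rules (such as picking one of the two ``missing'' vertex colours per face) would not have a consistent global extension. For part two, the real obstacle is identifying the right example, and the boundary of the $4$-simplex is the natural candidate precisely because its symmetry makes the face-adjacency graph the Johnson graph $J(5,3)$, whose chromatic number is a classical computation.
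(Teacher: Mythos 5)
Your proof is correct, and both halves take a genuinely different route from the paper.

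For the positive half, the paper derives the face-colouring from a $1$-factorization of $K_6$ restricted to $K_5$: the two colours missing from a face form a pair in $\mathbb{Z}_5$, and the colour of the face is the index of the near-perfect matching containing that pair. Your formula $\psi(f)=\varphi(u)+\varphi(v)+\varphi(w) \bmod 5$ is more elementary; the arithmetic observation that the four sums $\sum_{j\neq i}a_j$ around a chamber are pairwise distinct (their differences being $a_{i'}-a_i\neq 0$) replaces the matching bookkeeping entirely. The paper's choice fits the section's running theme of deriving edge- and face-colourings from $1$-factorizations, but your construction is self-contained and arguably cleaner; it also sidesteps a small elision in the paper's verification, which needs to rule out the case of two \emph{distinct} pairs landing in the same near-perfect matching.

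For the negative half, the paper and you use combinatorially the same triangulation (a tetrahedron with one chamber subdivided has vertex set $\{1,\dots,5\}$, all $\binom{5}{3}$ triples as $2$-cells and all $\binom{5}{4}$ quadruples as chambers, so it is exactly $\partial\Delta^4$). The paper argues by pigeonhole: among any three faces of the same colour, two share an edge, and since each edge lies in exactly three faces these two faces bound a common chamber. Your argument identifies the face-adjacency graph precisely as $J(5,3)\cong J(5,2)\cong L(K_5)$ and invokes $\chi'(K_5)=5$. Both are sound; the paper's is shorter and more ad hoc, while yours gives the exact chromatic number of the face-conflict graph and explains \emph{why} four colours fail, at the cost of citing the chromatic index of odd complete graphs. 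Either is a legitimate substitution.
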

\begin{proof}
    Let $\varphi \colon V(C) \to \Zbb_5$ be a $5$-colouring of the $1$-skeleton of $C$.
    Let $E(K_5) = M_1 \,\dot\cup\,\allowbreak M_2 \,\dot\cup\, \dots \,\dot\cup\,\allowbreak M_5$ be a partition of the edge set of the complete graph $K_5$  (on the vertex set $\Zbb_5$) into maximum matchings
    (take a $1$-factorization of $K_6$ and remove a vertex).
    To each face $f$ with vertices $v_0, v_1, v_2 \in f$, we assign the colour $i$ with $\Zbb_5 \setminus \{\varphi(v_0), \varphi(v_1), \varphi(v_2)\} \in M_i$.
    We denote this face-colouring by the function $\psi \colon F(C) \to \Zbb_5$.
    Consider two faces $f_1$ and $f_2$ on the boundary of the same chamber, and with vertices $v_0,v_1,v_2 \in f_1$ and $v_1,v_2,v_3 \in f_2$.
    Assume that $\psi(f_1) = \psi(f_2)$, then $\{\varphi(v_0), \varphi(v_1), \varphi(v_2)\} = \{\varphi(v_1), \varphi(v_2), \varphi(v_3)\}$ by the definition of $\psi$.
    It follows that $\varphi(v_0) = \varphi(v_3)$, which is a contradiction since $v_0$ and $v_3$ are adjacent in the $1$-skeleton of $C$.

    Consider the triangulation $C$ of $\Sbb^3$ whose 1-skeleton is the complete graph $K_5$, i.e.\ $C$ is obtained from the tetrahedron by subdividing one chamber.
    Obviously, the 1-skeleton of $C$ is 5-colourable.
    On the other hand observe that the triangles of $K_5$ are the faces of $C$.
    Hence $C$ has $\binom{5}{3} = 10$ faces.
    Assume that $C$ is 4-face-colourable.
    Then there exist three faces $f_1,f_2,f_3$ in $C$ that have the same colour.
    But then, two of them share an edge and therefore lie on the boundary of the same chamber of $C$ (since every edge in $C$ is incident with exactly three faces), a contradiction.
\end{proof}
\begin{cor}
    Every subdividable triangulation of $\Sbb^3$ can be face-coloured with five colours and this is best possible.
\end{cor}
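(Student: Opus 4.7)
The plan is to derive both halves of the corollary directly from results already established. For the upper bound, I would first invoke \autoref{thm:subdivisions} to translate the hypothesis: a triangulation $C$ of $\Sbb^3$ is subdividable precisely when its $1$-skeleton admits a proper $5$-colouring (take $d=3$ in the equivalence (\ref{itm:main-subdivisions-1}) $\Leftrightarrow$ (\ref{itm:main-subdivisions-2})). Once a proper $5$-vertex-colouring is in hand, the previous proposition produces a proper $5$-face-colouring of $C$, which gives the upper bound.

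For the \emph{best possible} part, I would reuse the explicit example already constructed in the previous proposition: the triangulation $C^\star$ of $\Sbb^3$ whose $1$-skeleton is the complete graph $K_5$ (obtained from the tetrahedron by subdividing one chamber). First I would observe that $C^\star$ is subdividable; this is immediate from \autoref{thm:subdivisions} since the $1$-skeleton $K_5$ is $5$-colourable (each vertex gets its own colour). Then I would quote the second half of the previous proposition, which shows that $C^\star$ has no proper $4$-face-colouring.

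Combining the two paragraphs yields a subdividable triangulation of $\Sbb^3$ that can be face-coloured with $5$ colours but not with $4$, showing that the bound of $5$ cannot be improved. No step here is substantive beyond assembling the named results, so I would expect the write-up to be essentially a one-line deduction followed by the pointer to the $K_5$-triangulation; the only thing to double-check is that the translation $d=3$ in \autoref{thm:subdivisions} genuinely matches the notion of subdividability used in this section (edges in place of $(d-2)$-cells, faces in place of $(d-1)$-cells), which it does.
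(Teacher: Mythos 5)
Your proof is correct and takes the same approach as the paper: translate subdividability into $5$-colourability of the $1$-skeleton via \autoref{thm:subdivisions}, then apply the immediately preceding face-colouring proposition for both the upper bound and the sharpness example (the triangulation with $K_5$ as $1$-skeleton). The only thing worth noting is that the paper's one-line proof cites \autoref{prop:vertex-coloring-implies-edge-coloring} (the edge-colouring result), which appears to be a typo for the unnamed face-colouring proposition you correctly invoke; your write-up is thus, if anything, a clarification of the intended argument.
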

\begin{proof}
    Follows from \autoref{thm:subdivisions} and \autoref{prop:vertex-coloring-implies-edge-coloring}.
\end{proof}

\section{Outlook}
\label{sec:outlook}

\autoref{que:1} remains open for $k \geq 3$:

\begin{customque}{\ref{que:1}}
    For $k,d \geq 3$, can you find a structural characterisation of the triangulations of $\Sbb^d$ whose $1$-skeleton is $(d+k)$-colourable?
\end{customque}


By \autoref{prop:4-edge-colouring}, every triangulation of $\Sbb^3$ has a 4-edge-colouring without monochromatic faces. By \autoref{cor:complete-subdivision-K6} there exists a triangulation of $\Sbb^3$ such that every 2-edge-colouring has a monochromatic face.
\begin{que}
    \label{que:3}
    Is there a triangulation of $\Sbb^3$ such that every 3-edge-colouring has a monochromatic face?
\end{que}%
Note that, by \autoref{cor:edge-colouring-without-monochromatic-faces} and since $R_3(3)=17$, every triangulation of $\Sbb^3$ whose 1-skeleton has a proper 16-colouring also has a 3-edge-colouring without monochromatic faces.

\section*{Acknowledgements}
I am grateful to Johannes Carmesin and Jan Kurkofka for stimulating discussions.

\bibliographystyle{plain}
\bibliography{literatur}

\end{document}